\numberwithin{table}{section}
\numberwithin{figure}{section}
\def\@cite#1#2{[{#1\if@tempswa ,~#2\fi}]}
\title{A scaling and recovering algorithm for the matrix $\varphi$-functions%
       \thanks{Version of \today.
       \funding{This work was supported by the Deanship of Scientific Research at King Khalid University Research Groups Program (grant RGP. 1/318/45).
               }
              }
      }
\author{Awad H. Al-Mohy%
           \thanks{%
                   Department of Mathematics, King Khalid University, Abha,
                   Saudi Arabia
                   (\email{ahalmohy@kku.edu.sa}
                    ).
        }
        \and Xiaobo Liu%
           \thanks{%
            Max Planck Institute for Dynamics of Complex Technical Systems,
           	Magdeburg, 39106, Germany
           	(\email{xliu@mpi-magdeburg.mpg.de}).
           	}
}
\def\argmin{\mathop{\operator@font argmin}}
\def\expm{\t{expm}}
\def\phifunm{\t{phi\_funm}}
\def\phipade{\t{phipade}}
\def\phipadeopt{\t{phipade\_opt}}
\def\phipadedft{\t{phipade\_dft}}
\def\expmblktri{\t{expm\_blktri}}
\def\Ht{\widetilde{h}}
\def\dt{\widetilde{d}}
\def\phat{\widehat{p}}
\def\C{\mathbb{C}}
\def\and{\mathop{\mathrm{and}}}
\def\Htmd{\Ht@@'_{2m+1}}
\def\Pant{Pad\'e approximant}
\active\gdef@{\mkern1mu}}
\def\be{backward error}
\def\alg{algorithm}
\def\Alg{Algorithm}
\def\e{eigenvalue}
\def\t#1{\texttt{\upshape #1}}
\DeclareMathOperator{\spn}{span}
\def\iu{\ensuremath{\mathrm{i}}} 
\newcounter{mylineno}
\let\oldtabcr\@tabcr
\def\mynewline{\refstepcounter{mylineno}%
                \llap{\footnotesize\arabic{mylineno}\hspace{5pt}}%
               }
\gdef\@tabcr{\@stopline \@ifstar{\penalty%
            \@M \@xtabcr}\@xtabcr\mynewline}
\def\a{\alpha}
\def\D{\Delta}
\def\th{\theta}
\def\resp{respectively}
\def\phat{\widehat{p}}
\def\dt{\mathrm{d}}
\def\1i{\mathrm{i}}
\def\e{\mathrm{e}}
\def\norm#1{\|#1\|}
\def\normt#1{\|#1\|_2}
\def\normi#1{\|#1\|_1}
\def\normo#1{\|#1\|_{\infty}}
\def\nbyn{n \times n}
\def\C{\mathbb{C}}
\def\R{\mathbb{R}}
\def\N{\mathbb{N}}
\def\cL{\mathcal{D}}
\def\cR{\mathcal{R}}
\let\oldref\ref
\def\ref#1{{\normalfont\oldref{#1}}}
\def\eqref#1{{\normalfont(\oldref{#1})}}
\newcounter{exprmt}
\def\Experiment{Experiment}
\mathchardef\Gamma="7100 \mathchardef\Delta="7101
\mathchardef\Theta="7102 \mathchardef\Lambda="7103
\mathchardef\Xi="7104 \mathchardef\Pi="7105 \mathchardef\Sigma="7106
\mathchardef\Upsilon="7107 \mathchardef\Phi="7108
\mathchardef\Psi="7109 \mathchardef\Omega="710A
\begin{document}
\maketitle

\begin{center}
	\textit{In memory of Nick Higham, an enduring inspiration to generations}
\end{center}
\smallskip

\begin{abstract}
A new scaling and recovering algorithm is proposed for simultaneously computing the matrix $\varphi$-functions that arise in exponential integrator methods for the numerical solution of certain first-order systems of ordinary differential equations. The algorithm initially scales the input matrix down by a nonnegative integer power of two, and then evaluates the $[m/m]$ diagonal Pad\'e approximant to $\varphi_p$, where $p$ is the largest index of interest. The remaining $[m+p{-}j/m]$ Pad\'e approximants to $\varphi_j$, $0 \le j < p$, are obtained implicitly via a recurrence relation. The effect of scaling is subsequently recovered using the double-argument formula. A rigorous backward error analysis, based on the $[m+p/m]$ Pad\'e approximant to the exponential, enables sharp bounds on the relative backward errors. These bounds are expressed in terms of the sequence $\|A^k\|^{1/k}$, which can be much smaller than $\|A\|$ for nonnormal matrices. 
The scaling parameter and the degrees of the Pad\'e approximants are selected to minimize the overall computational cost, which benefits from the sharp bounds and the optimal evaluation schemes for diagonal Pad\'e approximants. Furthermore, if the input matrix is (quasi-)triangular, the algorithm exploits its structure in the recovering phase. Numerical experiments demonstrate the superiority of the proposed algorithm over existing alternatives in both accuracy and efficiency. 

\end{abstract}

\begin{keywords}
matrix $\varphi$-functions, matrix exponential, exponential integrators, Pad\'e approximants, backward error analysis, scaling and recovering method, matrix functions
\end{keywords}
\begin{MSCcodes}
15A16, 65F60, 65L05
\end{MSCcodes}
\section{Introduction}
The matrix exponential and its associated  \(\varphi\)-functions
\begin{equation}\label{def.phik}
	\varphi_0(A) = \e^A, \quad 
	\varphi_j(A) = 
	\frac{1}{(k-1)!}\int_{0}^{1}\e^{(1-\tau)A}\tau^{k-1}\mathrm{d}\tau
	= \sum_{k=0}^{\infty} \frac{A^k}{(k+j)!},\quad j \in\N^+,
\end{equation}   
are fundamental to exponential integrators, a class of numerical methods for the time integration of stiff systems of ordinary differential equations (ODEs) of the form~\cite{hoos10}
\begin{equation}\label{ode}
	\frac{\mathrm{d}y}{\mathrm{d}t} = F(t,y(t)) \equiv Ay(t) + g(t,y(t)),\quad y(t_0)=y_0, \quad y\in\mathbb{\C}^n, 
	\quad t\ge t_0,
\end{equation}
where $g$ is a nonlinear function and the matrix $A\in\C^{n\times n}$ typically arises from the spatial discretization of parabolic partial differential equations.
The matrix $A$ in~\eqref{ode} usually represents the Jacobian of a certain function or an approximation thereof, so it is often large and sparse.
The solution of~\eqref{ode} satisfies the variation-of-constants formula,
\begin{equation*}
	y(t) = \e^{(t-t_0)A} y_0 + \int_{t_0}^{t} \e^{(t-\tau)A}g(\tau, y(\tau)) \mathrm{d}\tau,
\end{equation*}
which, by expanding $g$ in a Taylor series at $t_0$, can be written as~\cite[Lem.~5.1]{miwr05}
\begin{equation*}\label{ode-integral-expand}
	y(t) = \e^{(t-t_0)A} y_0 + 
	\sum_{k=1}^{\infty} \varphi_k\left( (t-t_0)A \right) (t-t_0)^k g^{(k-1)}(t_0,y_0),
\end{equation*}
where $g^{(k)}$ denotes the $k$th derivative of $g$. 
A suitable truncation of this Taylor series forms the starting point of a wide range of exponential integrator methods~\cite{coma02}, \cite{hoos10}.
For example, the exponential Rosenbrock–Euler method  for~\eqref{ode} in the nonautonomous case is given by~\cite{hos09},~\cite{pope63}
\begin{equation*}
	y_{n+1}=y_n + h \varphi_1(hJ_n)F(t_n,y_n) + 
	h^2 \varphi_2(hJ_n) \frac{\partial F}{\partial t}(t_n,y_n),
	\quad J_n = \frac{\partial F}{\partial y}(t_n,y_n),
\end{equation*}
where $y_n\approx y(t_n)$, $t_n=nh$, and $h>0$ is a stepsize.
In general, different types of exponential integrator schemes can be expressed as a linear combination of the form \cite{alhi11}, \cite{grt18}, \cite{hls98}, \cite{niwr12}
\begin{equation}\label{phi.lin.comb}
	\varphi_0(A)w_0+\varphi_1(A)w_1+\dots+\varphi_p(A)w_p,
\end{equation}
where $w_j$ are certain vectors related to the approximation of the nonlinear term $g(t,y(t))$ and $p$ is related to the order of the exponential integrator.

The evaluation of~\eqref{phi.lin.comb} requires the actions of the $\varphi$-functions on vectors.
For full matrices of small to medium dimension, the direct computation of $\varphi_j(A)$ can be efficient, as these functions need to be computed only once for some integration schemes and can then be reused~\cite{hoos10}, \cite{hos09}, \cite{niwr12}.
For large scale problems, however, evaluating $\varphi_j(A)$ is often computationally infeasible, especially when the matrix $A$ is not explicitly available but is only accessible through matrix--vector products. Moreover, $\varphi_j(A)$ can be dense even when $A$ is sparse. 
Therefore, much literature has been devoted to approximating the action $\varphi_j(A)w_j$ efficiently for large $A$, avoiding the explicit computation of $\varphi_j(A)$ and its subsequent multiplication with $w_j$. 
When a priori information about the spectrum of $A$ is available, the Leja interpolation method 
can be efficient~\cite{cali07}, \cite{ckor16}, \cite{cvb04}, \cite{det23}, and as can contour exponential integration techniques~\cite{crmu23}, \cite{sctr07}, \cite{tws06}.
Truncated Taylor series expansion \cite{alhi11} is also a viable approach.
Among numerical schemes for evaluating the action, the most widely studied and effective are arguably polynomial-~\cite{holu97}, \cite{saad92} and rational~\cite{best24}, \cite{gogr14}, \cite{ragn14} Krylov subspace methods.
Owing to their demonstrated superiority in realistic problems \cite{clpu13}, \cite{etl17}, \cite{loto13}, Krylov-based exponential integrators have been implemented in a number of software packages \cite{grt18}, \cite{loto14}, \cite{niwr12}, \cite{sidj98}.

For a given vector $w_j$, the polynomial Krylov method aims to find an appropriate approximation to $\varphi_j(A)w_j$ in the Krylov subspace, say, of order $m$,
$\mathcal{K}_m(A, w_j) = 
	\spn\{
	w_j, Aw_j, \dots, A^{m-1}w_j \}$.
Initializing with $v_1 = w_j / \normt{w_j}$, an orthonormal basis $V_m= [v_1,v_2,\dots,v_m]$ of the Krylov subspace $\mathcal{K}_m(A, w_j) = 
\mathcal{K}_m(\sigma I -A, w_j)$, $\sigma\in \C$, is then built via the Arnoldi (or Lanczos) process, yielding
\[
AV_m = V_m H_m + h_{m+1,m} v_{m+1} e_m^T,
\]
where 
$H_m$ is $m\times m$ upper Hessenberg (and the upper left part of its successor $H_{m+1}$), and $e_m$ is the $m$th column of the $m\times m$ identity matrix.
By Cauchy’s integral formula~\cite[Def.~1.11]{high:FM} and the fact that $V_m(\sigma I-H_m)^{-1}e_1$ is a Galerkin approximation to $(\sigma I-A)^{-1}w_j$~\cite[Eq.~(2.3)]{holu97}, the action $\varphi_j(A)w_j$ can be projected onto a lower-dimensional Krylov subspace, leading to the approximation
\begin{equation}\label{eq:phijb-approx}
	\varphi_j(A)w_j  
	\approx \frac{\normt{w_j}}{2\pi\iu} \int_{\Gamma} \varphi_j(\sigma) V_m(\sigma I-H_m)^{-1}e_1 \mathrm{d}\sigma = 
	\normt{w_j} V_m \varphi_j(H_m)e_1,
\end{equation}
where the contour $\Gamma$ encloses the spectrum of $A$.
For the rational Krylov method, the (rational) basis vectors are constructed differently by solving shifted linear systems involving $A$, but it results in a projected approximation in the same form as~\eqref{eq:phijb-approx}~\cite{gutt13}.

The matrix $H_m$ is typically of modest size, making the efficient and accurate direct computation of $\varphi_j(H_m)$ a key step in the practical implementation of the method.
Indeed, as noted by Minchev and Wright~\cite{miwr05} and Caliari et al.~\cite{cceo24}, the main challenge in the computation of exponential integrator schemes lies in the stable and efficient evaluation of the exponential and the related $\varphi$-functions.

Despite the practical importance, the computation of $\varphi$-functions for full matrices has received less attention than that of the matrix exponential~\cite{mova78},~\cite{mova03}.
Therefore, in this paper we focus on developing a backward stable and efficient \alg\ for \emph{simultaneously} computing the $\varphi$-functions of general dense matrices of modest sizes.
The new algorithm exploits the well-established scaling and recovering idea for the $\varphi$-functions~\cite{high:FM}, \cite{laws67}.
The input matrix is initially scaled down by $2^s$ for some appropriate nonnegative integer $s$.
Given the largest index $p$ of interest, $\varphi_p$ of the scaled matrix is then approximated with a diagonal Pad\'e approximant. 
Next, nondiagonal \Pant s to $\varphi_j$, $0 \le j < p$, of the scaled matrix are obtained implicitly via a recurrence relation.
Finally, the double-argument formula~\eqref{double.formula} (see below) is invoked repeatedly to recover the effect of scaling.

This manuscript is organized as follows. 
We present in section \ref{sec.thm.frw} a theoretical framework that facilitates our analysis and method derivation. In section~\ref{sect.be.anal}, we perform a backward error analysis of the new method for computing the $\varphi$-functions by drawing a connection to the $[m+p/m]$ \Pant\ to the exponential.
The computational cost of the algorithm when using the Paterson--Stockmeyer method is analyzed in section~\ref{sect.cost.para}, enabling the choice of the optimal algorithmic parameters to minimize this cost.
In section \ref{sec.algs}, we present our new \alg, followed by a brief review of some existing \alg s.
Numerical experiments are presented in section~\ref{sect.experiment}, where the performance of the new algorithm is compared with that of the best existing algorithms.
Conclusions are drawn in section~\ref{sect.conclusion}.
\section{Theoretical framework}\label{sec.thm.frw}

We begin by presenting a general theorem that facilitates our analysis. Given an arbitrary analytic function $f$ and a structured block triangular matrix $W$, we derive a recurrence relation for the off-diagonal blocks of $f(W)$. An important corollary then follows, establishing a notable recurrence relation among the nondiagonal \Pant s to the $\varphi$-functions. We conclude the section with a basic version of the proposed algorithm.
\begin{theorem}
\label{Thm.block.phi}
Let $f$ be an analytic function on a connected open set containing the origin and the spectrum of $A\in\C^{\nbyn}$, and let $p$ be a positive integer.
Consider the block matrix
\begin{equation}
\label{block_phi}
  W = \begin{bmatrix}
         A  &  E \\ 0 & J
\end{bmatrix},
\end{equation}
where $E=\begin{bmatrix}
           I & 0 & 0 & \cdots & 0 
         \end{bmatrix}\in\R^{n\times np}
        $, $J=J_p(0)\otimes I$, and $J_p(0)$ is the Jordan block associated with zero. Then the first block row of $f(W)$ is
\begin{eqnarray*}
  [f(W)]_{1: n,1: np}&=&\begin{bmatrix}
         g_0(A) & g_1(A) & g_2(A) & \cdots  & g_p(A) 
      \end{bmatrix},
\end{eqnarray*}         
where
\begin{equation}
\label{rec.g}
  g_i(A) = Ag_{i+1}(A) + \frac{f^{(i)}(0)}{i!}I,\quad g_0=f,\quad i=0\colon p-1. 
\end{equation}
 
\end{theorem}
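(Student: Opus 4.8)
The plan is to compute $f(W)$ by exploiting the block structure of $W$ together with the fact that $J = J_p(0)\otimes I$ is nilpotent, so that any analytic function of $W$ is in fact a polynomial evaluated at $W$, and the first block row can be read off directly. First I would recall that since $J_p(0)^{p+1}=0$, the matrix $W$ has the same spectrum as $A$ together with the eigenvalue $0$ (with multiplicity $np$), so $f(W)$ is well-defined under the stated hypotheses, and moreover $f(W) = q(W)$ for the Hermite interpolating polynomial $q$ of $f$ at these eigenvalues. Because $W$ is block upper triangular with diagonal blocks $A$ and $J$, the $(1,1)$ block of $f(W)$ is $f(A) = g_0(A)$, which matches the claimed formula. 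The work is therefore in the off-diagonal block $[f(W)]_{1:n,\,n+1:np+n}$.

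The key computational step is to write $W = D + N$ where $D = \operatorname{diag}(A, J)$ and $N = \begin{bmatrix} 0 & E \\ 0 & 0\end{bmatrix}$, and to observe that $N^2 = 0$, while $D$ and $N$ do \emph{not} commute — so I cannot simply expand. Instead I would work with the power series (equivalently the interpolating polynomial) $f(W) = \sum_{k\ge 0} c_k W^k$ and compute $W^k$ blockwise. A short induction shows
\[
W^k = \begin{bmatrix} A^k & \sum_{r=0}^{k-1} A^r E J^{k-1-r} \\ 0 & J^k \end{bmatrix},
\]
so the first block row of $f(W)$ has off-diagonal part $\sum_{k\ge 1} c_k \sum_{r=0}^{k-1} A^r E J^{k-1-r}$. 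Now I use the structure $E = [\,I\ 0\ \cdots\ 0\,]$ and $J = J_p(0)\otimes I$: multiplying $E$ on the right by $J^{s}$ shifts the identity block $s$ places to the right (and gives $0$ once $s\ge p$), so $E J^{s} = [\,0\ \cdots\ 0\ I\ 0\ \cdots\ 0\,]$ with the $I$ in block position $s+1$. Hence the $j$-th block ($j=1,\dots,p$) of the off-diagonal row collects exactly those terms with $k-1-r = j$, giving $g_j(A) := \sum_{k\ge j+1} c_k\, A^{\,k-1-j} = \sum_{k\ge j+1} c_k A^{k-1-j}$. Writing $c_k = f^{(k)}(0)/k!$ and comparing the closed forms for $g_j$ and $g_{j+1}$ yields precisely the recurrence $g_j(A) = A g_{j+1}(A) + \frac{f^{(j)}(0)}{j!} I$; isolating $c_j I = \frac{f^{(j)}(0)}{j!}I$ as the $r=k-1-j=0$, $k=j$ contribution... more carefully, $g_{j}(A) = c_{j} I + c_{j+1} A + c_{j+2} A^2 + \cdots = \frac{f^{(j)}(0)}{j!} I + A\,(c_{j+1} I + c_{j+2} A + \cdots) = \frac{f^{(j)}(0)}{j!} I + A\, g_{j+1}(A)$, which is \eqref{rec.g}.

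The main obstacle, and the step to be careful about, is the non-commutativity of $D$ and $N$: one must resist the temptation to factor $f(W)$ using $f(D+N)\approx f(D) + f'(D)N$, and instead push the blockwise power computation through honestly — the induction for $W^k$ is where all the bookkeeping lives, though it is elementary. A secondary point worth a sentence in the write-up is the reduction from "analytic function" to "polynomial": since $f$ need only be analytic on a neighborhood of $\{0\}\cup\Lambda(A)$, one justifies termwise manipulation either by noting $f(W)$ equals a fixed polynomial in $W$ (finiteness of the spectrum) or by invoking absolute convergence of the series for $W$ inside its domain of analyticity; both make the rearrangements above rigorous. Everything else — identifying $E J^s$, matching indices to block positions, and reading off $g_0 = f$ from the $(1,1)$ block — is direct.
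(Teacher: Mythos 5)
Your proposal is correct and takes essentially the same route as the paper: expand $f$ in its power series at the origin, obtain the $(1,2)$ block of $f(W)$ from the blockwise powers of $W$ (the paper cites a lemma of Al-Mohy for this formula, where you prove it by a short induction on $W^k$), use the shift action of $EJ^s$ (identity in block position $s+1$, zero for $s\ge p$) to collect terms into $g_j(A)=\sum_{k\ge j} \frac{f^{(k)}(0)}{k!}A^{k-j}$, and read off the recurrence \eqref{rec.g}. One indexing slip to fix in the write-up: block $j$ gathers the terms with $k-r=j$, not $k-1-r=j$, so your displayed sum $\sum_{k\ge j+1} c_k A^{k-1-j}$ is actually $g_{j+1}$; your subsequent ``more carefully'' expansion $g_j(A)=c_jI+c_{j+1}A+\cdots$ is the correct closed form, so the derivation of the recurrence stands.
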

\begin{proof}
Since $f$ is analytic, it has a power series expansion around the origin
\[
f(x)=\sum_{k=0}^{\infty}\frac{f^{(k)}(0)}{k!}x^k.
\]
Applying this to $W$ yields
\begin{equation}
\label{f.X}
  f(W) = \begin{bmatrix}
         f(A)  &  \cL_f(A,J,E) \\ 0 & f(J_p(0))\otimes I
\end{bmatrix},
\end{equation}
where the $(1,2)$ block can be expressed as (see \cite[Lem.~1.2]{almo24},~\cite[Problem.~3.6]{high:FM})
\begin{equation*}
	\cL_f(A,J,E)
	= \sum_{k=1}^{\infty}\frac{f^{(k)}(0)}{k!}\cL_{x^k}(A,J,E) 
	=  \sum_{k=1}^{\infty}\frac{f^{(k)}(0)}{k!}\sum_{j=1}^{k}
	A^{k-j}E(J_p(0)^{j-1}\otimes I).
\end{equation*}
Defining the row vectors
\begin{equation*}
	e_j^{(p)} = \begin{cases} 
		\mbox{$j$th row of the $p\times p$ identity matrix}, 
		& \mbox{if $1\le j\le p$,} \\
		\mbox{$1\times p$ zero vector}, & \text{otherwise},
		\end{cases}
\end{equation*}
we can write $E= e_1^{(p)}\otimes I$ and hence 
$E(J_p(0)^{j-1}\otimes I) = [J_p(0)^{j-1}]_{1,1: p} \otimes I$. Then we obtain
\begin{equation*}
    \cL_f(A,J,E)
    = \sum_{k=1}^{\infty}\frac{f^{(k)}(0)}{k!}\sum_{j=1}^{k}
                 A^{k-j} (e_j^{(p)}\otimes I)
   =          \begin{bmatrix}
                g_1(A) & g_2(A) & \cdots  & g_p(A)
                \end{bmatrix},
\end{equation*}
where we have used the fact that
\begin{equation*}
	\sum_{j=1}^{k}	A^{k-j} (e_j^{(p)}\otimes I) = 
	\begin{cases} 
	[A^{k-1} \quad A^{k-2} \quad \cdots  \quad A^{k-p}],
		& \mbox{if $k \ge p$,} \\
	[A^{k-1} \quad \cdots \quad A \quad I \quad 0 \ \cdots], & \text{otherwise}.
	\end{cases}
\end{equation*}
Therefore,
\begin{equation}\label{g.series}
 g_i(A)=\sum_{k=i}^{\infty}\frac{f^{(k)}(0)}{k!}A^{k-i},\quad i=1\colon p,
\end{equation}
and the recurrence \eqref{rec.g} follows immediately by defining $g_0=f$. 
\end{proof}
The linear operator $\cL_f\colon \C^{n\times d}\mapsto  \C^{n\times d}$ is introduced and studied by Al-Mohy \cite{almo24}.
By \eqref{def.phik} and Theorem~\ref{Thm.block.phi} with $f=\exp$, we have 
\begin{equation}\label{D.exp.mat}
\cL_{\exp}(A,J,E)=\begin{bmatrix}
                \varphi_1(A) & \varphi_2(A) & \dots  & \varphi_p(A)
                \end{bmatrix}.
\end{equation}
Therefore, the problem of computing the $\varphi$-functions can be reduced to the evaluation of $\varphi_0(A)=\e^A$ and $\cL_{\exp}(A,J,E)$, which, in view of Theorem \ref{Thm.block.phi}, can be computed by extracting the first block row of the exponential of the block matrix $W$ in \eqref{block_phi}.
However, it is preferable to approximate the first block row of $\e^W$
without explicitly forming the entire matrix $\e^W$, which is of size $n(p+1) \times n(p+1)$.
For this purpose, we invoke the following corollary, which reveals an important relation among the nondiagonal \Pant s to $\varphi_j(z)$, $j=0\colon p$.
\begin{corollary}
\label{Corol.pade}
Suppose $f(z)$ in the recurrence \eqref{rec.g} is the $[m+p/m]$ \Pant, $\cR^{(0)}_{m+p,m}(z)$, to $\e^z$, whose numerator and denominator are polynomials of degrees $m+p$ and $m$, \resp. Then $g_j(z)=:\cR^{(j)}_{m+p-j,m}(z)$, $j=1\colon p$, are the $[m+p{-}j/m]$ \Pant s to $\varphi_j(z)$.
\end{corollary}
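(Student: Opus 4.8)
The plan is to exploit the defining property of Padé approximants—matching of Taylor coefficients up to the appropriate order—together with the recurrence \eqref{rec.g}. Write $\cR^{(0)}_{m+p,m}(z) = P(z)/Q(z)$ with $\deg P = m+p$, $\deg Q = m$, normalized so $Q(0)=1$. By definition of the $[m+p/m]$ Padé approximant to $\e^z$, we have $\e^z Q(z) - P(z) = O(z^{m+p+1})$. For $j=1\colon p$ set $g_j(z) = \cR^{(j)}_{m+p-j,m}(z)$ as produced by \eqref{rec.g}; I would first establish by downward induction on $j$ (from $j=p$ to $j=1$) that $g_j$ is a rational function of the form $P_j(z)/Q(z)$ with the \emph{same} denominator $Q$ and with $\deg P_j = m+p-j$. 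Indeed, \eqref{rec.g} gives $g_{i}(z) = z\,g_{i+1}(z) + f^{(i)}(0)/i!$, so if $g_{i+1} = P_{i+1}/Q$ with $\deg P_{i+1} = m+p-i-1$, then $g_i = (z P_{i+1} + (f^{(i)}(0)/i!)\,Q)/Q$, and the numerator has degree $\max\{1 + (m+p-i-1),\, m\} = m+p-i$ since $p \ge i$; this also handles the base case by starting from $g_p = (zP + \text{const}\cdot Q)/Q$... wait, more carefully, one starts the induction from the top relation $g_{p-1}(z) = z g_p(z) + f^{(p-1)}(0)/(p-1)!$ and works down, but the cleanest formulation is simply: from \eqref{g.series}-type reasoning applied to $f = \cR^{(0)}_{m+p,m}$, unwind \eqref{rec.g} to get the closed form $g_j(z) = \big(P(z) - \sum_{k=0}^{j-1} (f^{(k)}(0)/k!)\, z^k\, Q(z)\big)\big/\big(z^j Q(z)\big)$, and check that the bracketed numerator is divisible by $z^j$ because $P(z)$ and $\e^z Q(z)$ agree to order $m+p+1 > j$ and $\e^z Q(z) = \sum_k (z^k/k!)\sum\dots$; so the division is exact and yields a polynomial of degree $m+p-j$ over $z^j$ — actually the degree count gives $\deg(\text{numerator}) = m+p$ and after dividing by $z^j$ we get degree $m+p-j$ provided the low-order cancellation is exact, which is the point just argued.

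Next I would verify the order condition: that $\varphi_j(z) - g_j(z) = O(z^{m+p-j+1 + m})$, i.e. $\varphi_j(z) Q(z) - P_j(z) = O(z^{m+p+1-j})$. The key identity is that $\varphi_j$ satisfies exactly the same recurrence as the $g_j$: from \eqref{def.phik} one has $\varphi_j(z) = z\,\varphi_{j+1}(z) + 1/j!$ — equivalently the generating identity $\varphi_{j}(z) = \sum_{k\ge j} z^{k-j}/k!$ mirrors \eqref{g.series}. Since $f = \cR^{(0)}_{m+p,m}$ matches $\e^z = \varphi_0(z)$ to order $z^{m+p+1}$, meaning $f^{(k)}(0)/k! = 1/k!$ for $k = 0\colon m+p$, the recurrence \eqref{rec.g} produces exactly the same low-order coefficients for $g_j$ as \eqref{g.series} with the true factorials produces for $\varphi_j$, up to the order determined by how many exact coefficients survive the repeated division by $z$; a short bookkeeping shows $g_j$ and $\varphi_j$ share Taylor coefficients through degree $m+p-j$. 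Combined with $\deg P_j = m+p-j$ and $\deg Q = m$, this is precisely the $[m+p-j/m]$ Padé condition for $\varphi_j$. Finally, uniqueness of Padé approximants (for which the $[m+p-j/m]$ entry of the Padé table is nonsingular — inherited from nonsingularity of the exponential's Padé table, or verifiable directly) identifies $g_j = \cR^{(j)}_{m+p-j,m}$ as claimed.

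The main obstacle I anticipate is the exact divisibility / degree-counting step: showing that unwinding the recurrence really does produce a numerator divisible by $z^j$ and that no spurious degree drop occurs in the numerator $P_j$, so that the bidegree is exactly $(m+p-j, m)$ and not smaller. This hinges on tracking that the leading coefficient of $P$ does not get annihilated (it cannot, since $z^j$ divides only the low-order tail) and that the order-of-contact budget $m+p+1$ is spent correctly across the $j$ successive divisions, leaving exactly $m+p+1-j$ matched coefficients — enough for the $[m+p-j/m]$ approximant, which needs $m+p-j+m+1$ but with the shared denominator $Q$ only $m+p-j+1$ independent numerator conditions remain. Getting this accounting airtight, rather than the formal manipulations, is where care is required.
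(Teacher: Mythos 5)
There is a genuine gap in your order-of-contact accounting, and it sits exactly where you yourself flagged the need for care. You state the defining property of the $[m+p/m]$ \Pant\ to $\e^z$ as $\e^zQ(z)-P(z)=O(z^{m+p+1})$, but the correct condition is $\e^zQ(z)-P(z)=O(z^{2m+p+1})$ (equivalently $\e^z-\cR^{(0)}_{m+p,m}(z)=O(z^{2m+p+1})$, which is the paper's expansion \eqref{pade.error}). Because you work with the budget $m+p+1$ instead of $2m+p+1$, your conclusion that ``$g_j$ and $\varphi_j$ share Taylor coefficients through degree $m+p-j$'' falls short of the $[m+p{-}j/m]$ Pad\'e condition, which requires agreement through degree $2m+p-j$, i.e.\ $\varphi_j(z)-g_j(z)=O(z^{(m+p-j)+m+1})$. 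Your attempted repair --- that with the shared denominator $Q$ ``only $m+p-j+1$ independent numerator conditions remain'' --- is not a valid characterization of a Pad\'e approximant: a rational function of type $(m+p-j,m)$ matching $\varphi_j$ only to degree $m+p-j$ is far from unique. That argument would work only if you already knew that the true $[m+p{-}j/m]$ \Pant\ to $\varphi_j$ has denominator $Q$, but that is essentially the fact to be proved (it is a consequence of the corollary, not an available input), and your appeal to uniqueness likewise needs contact of order $2m+p-j+1$, not $m+p-j+1$.

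The fix is small and brings you onto the paper's route, which your argument otherwise mirrors (unwind \eqref{rec.g} to $f(z)=\sum_{i=0}^{j-1}\frac{f^{(i)}(0)}{i!}z^i+z^jg_j(z)$, note the divisibility, compare with $\varphi_j(z)=z^{-j}\bigl(\e^z-\sum_{k=0}^{j-1}z^k/k!\bigr)$): since $\cR^{(0)}_{m+p,m}$ agrees with $\e^z$ to order $2m+p+1$, its first $j\le p$ Taylor coefficients equal $1/i!$, so subtracting the common degree-$(j{-}1)$ Taylor polynomial and dividing by $z^j$ gives
\begin{equation*}
\varphi_j(z)-g_j(z)=z^{-j}\bigl(\e^z-\cR^{(0)}_{m+p,m}(z)\bigr)
=\frac{(-1)^m(m+p)!\,m!}{(2m+p)!(2m+p+1)!}\,z^{2m+p-j+1}+O(z^{2m+p-j+2}),
\end{equation*}
which is exactly the order $(m+p-j)+m+1$ required, and together with the (at most) bidegree $(m+p-j,\,m)$ of $g_j$ --- your divisibility and degree bookkeeping for $P_j=z^{-j}\bigl(P-\sum_{i<j}\frac{f^{(i)}(0)}{i!}z^iQ\bigr)$ is fine, and in fact divisibility by $z^j$ is automatic because you subtract $f$'s own Taylor polynomial --- identifies $g_j$ as the $[m+p{-}j/m]$ \Pant\ to $\varphi_j$ without any extra assumption on its denominator.
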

\begin{proof}
With a simple manipulation, the recurrence \eqref{rec.g} can be written as 
\[
f(z) = \sum_{i=0}^{j-1} \frac{f^{(i)}(0)}{i!} z^i + z^j g_j(z), 
\quad 1\le j\le p.
\]
For $f=\exp$, it is clear that $f^{(i)}(0)=1$ and $g_j=\varphi_j$. Now, let $f=\cR^{(0)}_{m+p,m}$. Then
$g_j(z)$ is a rational approximant to $\varphi_j(z)$ of degree $[m+p{-}j/m]$. The sufficient condition for $g_j(z)$ to be the \Pant\ to $\varphi_j(z)$ is 
\[
\left. \frac{\dt^i}{\dt z^i} \cR^{(0)}_{m+p,m}(z) \right|_{z=0} = 1, \quad 
i = 0 \colon j-1,\quad 1\le j\le p,
\]
 which follows immediately from the Pad\'e error expansion \cite[Eq.~(10.24)]{high:FM}
\begin{equation}
\label{pade.error}
 \e^z-\cR^{(0)}_{m+p,m}(z) = \frac{(-1)^m(m+p)!@m!}{(2m+p)!(2m+p+1)!}z^{2m+p+1}+O(z^{2m+p+2}).
\end{equation}
The Pad\'e error expansion
\begin{equation}\label{pade.error.j}
	\varphi_j(z) - \cR^{(j)}_{m+p-j,m}(z)=
	\frac{(-1)^m(m+p)!@m!}{(2m+p)!(2m+p+1)!}z^{2m+p-j+1}+O(z^{2m+p-j+2}),
\end{equation} 
valid for $j=0\colon p$, is obtained by first adding and subtracting the terms $\sum_{k=0}^{j-1}z^k/k!$ on the left-hand side of~\eqref{pade.error}, followed by division of both sides by $z^j$.
\end{proof}
From now on, we will denote the $[m+p{-}j/m]$ \Pant\ to $\varphi_j(z)$ simply by $\cR^{(j)}_m(z)$, as the dependency on the fixed parameter $p$ is clear.

Corollary \ref{Corol.pade} and the recurrence \eqref{rec.g} provide a practical way to evaluate the \Pant s $\cR^{(j)}_m(z)$. We begin by computing $\cR^{(p)}_m(z) := N_m(z) / D_m(z)$ using the formulae\footnote{The subscript $m$ denotes the degree; shifting it yields no valid relations among \Pant s.}
given in
\cite[sect.~5]{bsw07}, \cite[Lem.~2]{skwr09}, or \cite[Thm.~10.31]{high:FM}:
\begin{equation}
\label{pade.Nm.Dm}
\begin{aligned}
N_m(z) &= \frac{m!}{(2m + p)!} \sum_{i=0}^{m} \left[ \sum_{j=0}^{i} \frac{(2m + p - j)! (-1)^j}{j!(m - j)! (p + i - j)!} \right] z^i, \\
D_m(z) &= \frac{m!}{(2m + p)!} \sum_{i=0}^{m} \frac{(2m + p - i)!}{i!(m - i)!} (-z)^i. 
\end{aligned}
\end{equation}
We then use the recurrence
\begin{equation}\label{rec.pade}
\cR^{(j)}_m(z) = z\cR^{(j+1)}_m(z) + \frac{1}{j!},\quad j = p\!-\!1\,{:}\!-1\,{:}\,0
\end{equation} 
to recover the remaining \Pant s. This recurrence offers a computational advantage as discussed later in section \ref{sect.cost.para}.
Since \Pant s provide accurate results near the origin, it is often necessary to appropriately scale the input matrix down by a power of 2, apply the \Pant s, and then undo the effect of the scaling.
The scaling and squaring method for computing $\e^W$, where $W$ is defined in \eqref{block_phi}, exploits the relation $\e^{2W} = \left(\e^{W}\right)^2$, which is applied repeatedly to recover an approximation of $\e^W$ via an approximant to $\e^{2^{-s}W}$, for a suitably chosen nonnegative integer $s$~\cite[sect.~10.3]{high:FM}.
Equating the first block row of this relation yields the double-argument formula
\cite{bsw07}, \cite{skwr09}
\begin{equation}\label{double.formula}
\varphi_j(2A) = \frac{1}{2^j} \left( \varphi_0(A)\varphi_j(A) +
\sum_{k=1}^{j} \frac{\varphi_k(A)}{(j-k)!} \right), \quad j = p\colon-1\colon 0,
\end{equation}
which is used recursively to recover $\varphi_j(A)$ from $\varphi_j(2^{-s}A)$, $j=0:p$.
\begin{algorithm}[t]
\caption{Basic version \alg\ for computing $\varphi$-functions.}
\label{alg.basic}
\begin{algorithmic}[1]
    \State \textbf{Inputs:} $A\in\C^{\nbyn}$ and $p\in\N^+$. 
    \State \textbf{Outputs:} Approximations for $\varphi_0(A)=\e^A$, $\varphi_1(A)$, $\varphi_2(A)$, \dots, $\varphi_p(A)$. 
    \State Select a scaling parameter $s$ and a degree $m$ of \Pant.
    \State $A \gets A / 2^s$     
    \State Evaluate the $[m/m]$ \Pant, $\cR^{(p)}_m\equiv \cR^{(p)}_m(A)$, to $\varphi_p(A)$ using
     \eqref{pade.Nm.Dm}.\label{lin.cost.pade}     
    \State Invoke the recurrence \eqref{rec.pade} to recover the approximants $\cR^{(j)}_m\equiv \cR^{(j)}_m(A)$ to $\varphi_j(A)$.\label{lin.phase1}
    \For{$i = 1 \colon s$} \label{lin.phase2.on}
        \For{$j = p\colon\!-1\,\colon0$}
            \State $\cR^{(j)}_m \gets 2^{-j} \left( \cR^{(0)}_m \cR^{(j)}_m + \sum_{k=1}^{j} \cR^{(k)}_m / (j-k)! \right)$ 
            \Comment{using \eqref{double.formula}} 
        \EndFor
    \EndFor \label{lin.phase2.off}
\end{algorithmic}
\end{algorithm}
The basic computational framework for the $\varphi$-functions is presented as Algorithm~\ref{alg.basic}, and the key task is to determine the scaling parameter $s$
and the degree of \Pant\ in such a way that the overall computational cost is minimized.
\section{Backward error analysis} 
\label{sect.be.anal}
In this section, we analyze the backward error arising from approximating the $\varphi$-functions using the scaling and recovering method with \Pant s.
We relate the backward error of the approximation of the $\varphi$-functions to that of the matrix exponential by employing the $[m+p/m]$ \Pant\ to $\e^z$. We recall two important backward error analyses: the work of Al-Mohy and Higham \cite[sect.~3]{alhi09a} and that of Al-Mohy \cite[sect.~3]{almo24}. In view of the recurrence \eqref{rec.pade} and \eqref{pade.Nm.Dm}, all the \Pant s \emph{share} the same denominator polynomial $D_m$.
\begin{theorem}\label{thm.berr}
Let $\cR^{(0)}_m(z)$ be the $[m+p/m]$ \Pant\ to $\e^z$ whose denominator polynomial is $D_m(z)$ and
\begin{equation}\label{Omega-m}
  \Omega^{(n)}_{m,p} = \{\,Z\in\C^{\nbyn} : \rho( \e^{-Z} \cR^{(0)}_m(Z) - I ) < 1,\quad
  \rho(Z) < \nu_{m,p} \,\},
\end{equation}
where $\nu_{m,p} = \min\{\,|z| : D_m(z) = 0 \,\}$ and $\rho$ denotes the spectral radius. Then the function
\begin{equation}
      h_{m,p}(X) = \log( \e^{-X} @\cR^{(0)}_m(X) )
      \label{h2mp1}
\end{equation}
is defined for all $X\in\Omega^{(n)}_{m,p}$, where $\log$ is the
principal matrix logarithm, and 
\[
\cR^{(0)}_m(X)= \e^{X+h_{m,p}(X)},\quad X\in\Omega^{(n)}_{m,p}.
\]
\end{theorem}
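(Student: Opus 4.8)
The goal is to show that $h_{m,p}$ is well defined on $\Omega^{(n)}_{m,p}$ and that exponentiating it recovers $\cR^{(0)}_m(X)$. The natural approach is to verify, in order, that (i) $\cR^{(0)}_m(X)$ is defined and invertible on $\Omega^{(n)}_{m,p}$, (ii) the principal logarithm of $\e^{-X}\cR^{(0)}_m(X)$ exists there, and (iii) the composition $\e^{X+h_{m,p}(X)}$ collapses to $\cR^{(0)}_m(X)$. First I would note that $D_m$ is a polynomial whose smallest-modulus root has modulus $\nu_{m,p}$, so the second defining condition $\rho(X)<\nu_{m,p}$ guarantees $D_m(X)$ is nonsingular; hence $\cR^{(0)}_m(X)=N_{m+p}(X)D_m(X)^{-1}$ is a well-defined rational function of $X$, and since $\e^{-X}$ is always invertible, so is $\e^{-X}\cR^{(0)}_m(X)$.

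Next I would invoke the first defining condition, $\rho(\e^{-X}\cR^{(0)}_m(X)-I)<1$: this places every eigenvalue of $\e^{-X}\cR^{(0)}_m(X)$ in the open disk of radius $1$ centered at $1$, which lies strictly inside the right half-plane and in particular excludes $\mathbb{R}^-\cup\{0\}$. Therefore the principal matrix logarithm of $\e^{-X}\cR^{(0)}_m(X)$ is defined, and $h_{m,p}(X)=\log(\e^{-X}\cR^{(0)}_m(X))$ makes sense for every $X\in\Omega^{(n)}_{m,p}$. The identity $\e^{h_{m,p}(X)}=\e^{-X}\cR^{(0)}_m(X)$ then holds because $\exp$ and the principal $\log$ are mutual inverses on the relevant region.

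The remaining step is to pass from $\e^{h_{m,p}(X)}=\e^{-X}\cR^{(0)}_m(X)$ to $\e^{X+h_{m,p}(X)}=\cR^{(0)}_m(X)$. The subtlety here is that $\e^{X+Y}=\e^X\e^Y$ requires $X$ and $Y$ to commute; so the key observation is that $h_{m,p}(X)$, being a (principal logarithm of a) rational function of $X$, is itself a function of $X$ and hence commutes with $X$. With commutativity in hand, $\e^{X+h_{m,p}(X)}=\e^X\e^{h_{m,p}(X)}=\e^X\e^{-X}\cR^{(0)}_m(X)=\cR^{(0)}_m(X)$, as claimed. I expect the only point needing any care is the justification that all these matrix functions — $\cR^{(0)}_m(X)$, $\e^{\pm X}$, and $h_{m,p}(X)$ — are simultaneously functions of the single matrix $X$ and thus pairwise commute, so that both the logarithm–exponential inversion and the splitting of $\e^{X+h_{m,p}(X)}$ are legitimate; this is where one would cite the standard primary-matrix-function machinery (e.g. \cite[Ch.~1]{high:FM}). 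Everything else is a direct translation of the scalar facts about Padé denominators and the principal logarithm to the matrix setting via the spectral conditions built into $\Omega^{(n)}_{m,p}$.
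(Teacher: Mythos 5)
Your proof is correct, and it is essentially the canonical argument: the paper states Theorem~\ref{thm.berr} without proof, as the $[m+p/m]$ analogue of the backward-error result of Al-Mohy and Higham \cite{alhi09a}, whose proof runs exactly along your lines --- $\rho(X)<\nu_{m,p}$ makes $D_m(X)$ nonsingular so $\cR^{(0)}_m(X)$ is defined, the condition $\rho(\e^{-X}\cR^{(0)}_m(X)-I)<1$ confines the spectrum of $\e^{-X}\cR^{(0)}_m(X)$ to the open disc of radius $1$ about $1$ (hence off $\mathbb{R}^-\cup\{0\}$, so the principal logarithm exists), and commutativity of $X$ with $h_{m,p}(X)$, both primary functions of $X$, yields $\e^{X+h_{m,p}(X)}=\e^X\e^{h_{m,p}(X)}=\cR^{(0)}_m(X)$. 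One cosmetic slip: invertibility of $\e^{-X}$ alone does not make the product $\e^{-X}\cR^{(0)}_m(X)$ invertible, but this is harmless since the spectral-radius condition you invoke immediately afterwards already excludes $0$ from its spectrum.
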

If $X$ is not in $\Omega^{(n)}_{m,p}$, choose $s$ so that $2^{-s}X \in
\Omega^{(n)}_{m,p}$. Then
\begin{equation}
\left[\cR^{(0)}_m(2^{-s}X)\right]^{2^s} = \e^{X+ 2^s h_{m,p}(2^{-s}X)} =: \e^{X+\D X}
\label{be.eq}
\end{equation}
and the matrix $\D X = 2^s h_{m,p}(2^{-s}X)$ represents the backward error resulting from the approximation of $\e^X$ by the scaling and squaring method via \Pant s.
Over $\Omega^{(n)}_{m,p}$, the function $h_{m,p}$ has the power series
expansion
\begin{eqnarray}
h_{m,p}(X) &=& \sum_{k=2m+p+1}^\infty c_{m,p,k}@@X^k \nonumber\\
     &=& \frac{(-1)^{m+1}(m+p)!@m!}{(2m+p)!(2m+p+1)!}@@X^{2m+p+1}+O(X^{2m+p+2}).
     \label{power.h2m}
\end{eqnarray}

Applying the $\cL$ operator associated with the mappings defined in \eqref{be.eq} to the ordered triplet $(A, J, E)$ from Theorem~\ref{Thm.block.phi}, we obtain~\cite[Eq.~(3.5)]{almo24}
\begin{eqnarray}
\cL_{\left[\cR^{(0)}_m(2^{-s}x)\right]^{2^s}}(A,J,E)=
  \cL_{\exp}(A+\D A,J+\D J,E+\D E),
  \label{apply.D}
\end{eqnarray}
where $\D A \!=\! 2^s h_{m,p}(2^{-s}\!A)$, $\D J\! =\! 2^sh_{m,p}(2^{-s}\!J)$, and
$\D E\! = \cL_{h_{m,p}}(2^{-s}\!A,2^{-s}J,E)$ are the \emph{overall} backward errors with respect to the original matrices $A$, $J$, and $E$ resulting from the approximation via the scaling and squaring method with the $[m+p/m]$ \Pant.
Since $J$ is nilpotent of index $p$, we have $\D J = 0$. This is expected as \eqref{pade.error} indicates that $\cR^{(0)}_m(J)=\e^J$. Therefore, we need to control the backward errors $\D A$ and $\D E$. 
Using the $1$-norm,
we have
\begin{equation}
 \label{berr-bound1}
  \frac{ \normi{\D A} }{ \normi{A} } = \frac{2^s \normi{ h_{m,p}(2^{-s}A)} }
                                              { \normi{A} }             
                                   = \frac{\normi{ h_{m,p}(2^{-s}A)} }
                                              { \normi{2^{-s}A} }\\
                                  \le \frac{\Ht_{m,p}(\th)}{\th},           
\end{equation}
where $\Ht_{m,p}(\th) = \sum_{k=2m+p+1}^\infty |c_{m,p,k}|@@\th^k$ is obtained from \eqref{power.h2m} and $\th$ is a nonnegative real-valued function of $2^{-s}A$ (the singularity of $\Ht_{m,p}(\th)/\th$ is removable). A simple choice for $\th$ is $\normi{2^{-s}A}$; however, 
we show that choosing $\th$ differently yields a tighter bound.
Define \cite[sect.~3]{alhi09a}
\begin{equation}
 \th_{m,p} = \max\{\,\th : \Ht_{m,p}(\th)/\th \le u \,\},
  \label{th-mp}
\end{equation}
 where $u  = 2^{-53} \approx 1.1\times
10^{-16}$ is the unit roundoff for IEEE double precision
arithmetic. We compute this $\th_{m,p}$ for $m=1\colon 20$ and $p=1\colon 10$ and list selected values in Table \ref{tab:theta}, where some values of $\th_{m,p}$ are adjusted based on our cost and \be\ analysis below. 
\begin{table}
	\caption{Selected values of $\theta_{m_i,p}$ as defined in~\eqref{th-mp}, refined using~\eqref{berr-bound2}. Each $\theta_{m_i,p}$ lies in the disc $\{z \in \C : |z| < \nu_{m_i,1}\}$, where $m_i$ is given by~\eqref{m.opt.orders}.}
	\label{tab:theta}
	\centering
\scalebox{.85}{
\begin{tabularx}{\textwidth+0.6cm}{@{\extracolsep{\fill}}c@{\hspace{.6cm}}cccccccc}
\toprule[1pt] 
$p$ & $m_0=1$ & $m_1=2$ & $m_2=3$ & $m_3=4$ & $m_4=6$ & $m_5=8$ & $m_6=10$ & $m_7=12$  \\
\midrule 
 1  & 2.00e-5 & 3.81e-3 & 3.97e-2 & 1.54e-1 & 7.26e-1 & 1.76  & 3.17  & 4.87    \\
 2  & 3.76e-5 & 6.09e-3 & 5.81e-2 & 2.13e-1 & 9.28e-1 & 2.06  & 3.54  & 5.28    \\
 3  & 7.37e-5 & 9.87e-3 & 8.53e-2 & 2.94e-1 & 1.16  & 2.37  & 3.91  & 5.69    \\
 4  & 1.50e-4 & 1.62e-2 & 1.26e-1 & 4.06e-1 & 1.40  & 2.69  & 4.28  & 6.09    \\
 5  & 3.15e-4 & 2.70e-2 & 1.87e-1 & 5.62e-1 & 1.66  & 3.01  & 4.65  & 6.50    \\
 6  & 6.86e-4 & 4.55e-2 & 2.80e-1 & 7.79e-1 & 1.92  & 3.34  & 5.02  & 6.90    \\
 7  & 1.54e-3 & 7.75e-2 & 4.18e-1 & 1.05  & 2.20  & 3.68  & 5.40  & 7.30    \\
 8  & 3.54e-3 & 1.33e-1 & 6.26e-1 & 1.26  & 2.48  & 4.01  & 5.77  & 7.69    \\
 9  & 8.35e-3 & 2.30e-1 & 9.34e-1 & 1.48  & 2.77  & 4.35  & 6.14  & 8.08    \\
10  & 2.01e-2 & 3.99e-1 & 1.16  & 1.71  & 3.07  & 4.69  & 6.51  & 8.47    \\
\midrule 
$\nu_{m_i,1}$  & 3.00  & 4.47  & 5.65   & 7.05   &  9.68   & 1.23e1 & 1.50e1 & 1.76e1   \\
\bottomrule[1pt] 
\end{tabularx} 
}
\end{table}
Before we proceed, it is essential to verify that the evaluation of the $[m/m]$ \Pant, $\cR^{(p)}_m(z) = N_m(z) / D_m(z)$, and the recurrence \eqref{rec.pade} are well defined for all $z$ lying in the disc centered at the origin with radius $\th_{m,p}$.
This has been confirmed symbolically.
We compute $\nu_{m,p}$, the smallest modulus of the poles of $\cR^{(p)}_m(z)$ as defined in Theorem \ref{thm.berr}, and observe that, 
with either index fixed, $\nu_{m,p}$ increases as the other index increases.
Moreover, $\th_{m,p} < \nu_{m,1} \le \nu_{m,p}$ for $m\le 20$ and $p\le 10$.
The last row of Table \ref{tab:theta} lists selected values of $\nu_{m,1}$.
For fixed $m$ and $p$, the function $\th^{-1}\Ht_{m,p}(\th)$ is increasing over $(0,\infty)$. Thus, $\th_{m,p}$ is the unique point at which equality in \eqref{th-mp} is attained.
However, for a fixed $\th$, the sequence $\th^{-1}\Ht_{m,p}(\th)$ decreases as either $m$ or $p$ increases.  
Thus, the parameters $\th_{m,p}$ form an increasing sequence in either $m$ or $p$. That is, $\th_{m,1}<\th_{m,2}<\cdots<\th_{m,k}<\cdots$ and
$\th_{1,p}<\th_{2,p}<\cdots<\th_{k,p}<\cdots$.

For the \be\ $\D E$, Al-Mohy \cite[Thm.~3.2]{almo24} derives a bound for the relative backward error $\norm{\D E}/\norm{E}$ for any subordinate matrix norm. However, the special structure of the problem here allows us to derive a
sharper bound. 
Using \eqref{g.series} with $g_0=h_{m,p}$, the $j$th block of $\D E$ is
\begin{eqnarray*}
  (\D E)_j &=& \sum_{k=2m+p+1}^{\infty}c_{m,p,k}@@(2^{-s}A)^{k-j}\\
  &=& (2^{-s}A)^{-j} h_{m,p}(2^{-s}A),
  \quad 1\le j\le p,
\end{eqnarray*}
where the singularity is removable.
Therefore, 
$\normi{(\D E)_j}\le\th^{-j}\Ht_{m,p}(\th)$ for a suitably chosen $\th$ and
\begin{eqnarray}
  \frac{\normi{\D E}}{\normi{E}}=\normi{\D E}
           &\le& \max\left\{\,\th^{-1}\Ht_{m,p}(\th),\,\th^{-2}\Ht_{m,p}(\th),\cdots,\,
                  \th^{-p}\Ht_{m,p}(\th)\,\right\}\nonumber\\
           &=&\begin{cases}                   \label{berr-bound2}
  \th^{-1}\Ht_{m,p}(\th),         & \quad \th\ge 1, \\
  \th^{-p}\Ht_{m,p}(\th), &  \quad 0 < \th < 1,
\end{cases}         
\end{eqnarray}
recalling that $\normi{E}=1$. 
In view of this bound, and together with \eqref{berr-bound1} and \eqref{th-mp}, we have $\normi{\D A}\le u\normi{A}$ and $\normi{\D E}\le u$ if $1\le\th\le\th_{m,p}$. However, $\normi{\D E}$ can exceed $u$ if $0<\th\le\th_{m,p}<1$ since $\th^{-p}\Ht_{m,p}(\th) > u$ for $\th=\th_{m,p}$.
To address this issue, those $\th_{m,p}<1$ of Table \ref{tab:theta} are \emph{recomputed} after replacing 
$\th^{-1}\Ht_{m,p}(\th)$ with $\th^{-p}\Ht_{m,p}(\th)$ in \eqref{th-mp}.
Consequently, those recomputed values are smaller than the original ones, ensuring that $\normi{\D A}\le u\normi{A}$ remains valid. They also preserve the monotonicity property of the sequence $\th_{m,p}$. For $m\ge7$ and any $p$, the original values of $\th_{m,p}$ remain unchanged since they are already greater than one.  

Now, if we choose $s$ such that $\th=\normi{2^{-s}A}\le\th_{m,p}$, a straightforward choice corresponding to the classical approach, then the backward error bounds in \eqref{berr-bound1} and \eqref{berr-bound2} will not exceed $u$ in exact arithmetic. 
However, Al-Mohy and Higham \cite[Alg.~6.1]{alhi09a} propose a more liberal choice of the scaling parameter for the matrix exponential, currently implemented in the MATLAB function \expm. A key objective of their algorithm was to overcome the overscaling phenomenon inherent in the classical scaling and squaring method and to reduce computational cost.
Accordingly, we bound the relative backward errors in \eqref{berr-bound1} and \eqref{berr-bound2} by choosing $s$ and $\th=\a_r(2^{-s}A)$, where 
\begin{equation}
  \a_r(A) = \max\left( \normi{A^r}^{1/r},\normi{A^{r+1}}^{1/(r+1)}
              \right),
             \label{def-alphap}
\end{equation}
and $r$ is selected to minimize $\a_r(A)$ subject to the constraint $2m+\phat+1 \ge r@(r-1)$ \cite[Eq.~(5.1)]{alhi09a} with
\[
\phat = \begin{cases}
 	 p, &  \text{if } \th_{m,p}\ge1, \\
 	 0, &  \text{otherwise}.
 \end{cases}
\]
This choice caters to the bound in \eqref{berr-bound2}. The largest value of the positive integer $r$ satisfying the constraint is
\begin{equation}
\label{r_m}
r_{\max} = \left\lfloor \frac{1 + \sqrt{1 + 4(2m + \phat + 1)}}{2} \right\rfloor.
\end{equation}
The advantage of using $\a_r(A)$ rather than $\normi{A}$ is that $\a_r(A)$ can be significantly smaller and closer to the spectral radius of $A$ than $\normi{A}$, especially for highly nonnormal matrices. Thus, by \cite[Thm.~4.2(a)]{alhi09a} we have 
\begin{equation*}
\normi{h_{m,p}(2^{-s}A)}\le\Ht_{m,p}\bigl(\a_r(2^{-s}A)\bigr)
\end{equation*}
and, consequently, the relative backward errors in \eqref{berr-bound1} and \eqref{berr-bound2} are confined by the bound: 
\begin{equation}
\label{berr.alphar.bound}
\max\left(\frac{\normi{\D A}}{\normi{A}},\frac{\normi{\D E}}{\normi{E}}\right)\le
\frac{\Ht_{m,p}\bigl(\a_r(2^{-s}A)\bigr)}
{\a_r(2^{-s}A)^\delta},
\quad \delta = 
(p-1)(p-\phat\,)p^{-1}+1.
\end{equation}
Thus, \Alg\ \ref{alg.basic} with the selection of $s$ and $m$ such that $2^{-s}\a_r(A)\le\th_{m,p}$ for any $2\le r\le r_{\max}$, guarantees that the \be\ bound \eqref{berr.alphar.bound} does not exceed $u$ in exact arithmetic. However, since $\a_r(A) \ll \normi{A}$ can occur for some matrices, the polynomials $N_m$ and $D_m$ in \eqref{pade.Nm.Dm} evaluated at $2^{-s}A$ may not be sufficiently accurate. Al-Mohy and Higham \cite[sect.~5]{alhi09a} address this issue for the matrix exponential by proposing a modification to the scaling parameter selection, which performs satisfactorily in practice. 
We take a similar approach by employing the first term in the \be\ series  \eqref{power.h2m} using the matrix $|A|$.
Suppose $t$ is a nonnegative integer such that $2^{-t}A\in\Omega^{(n)}_{m,p}$, with $\Omega^{(n)}_{m,p}$ as defined in \eqref{Omega-m}. 
Using the fact that $|h_{m,p}(2^{-t}A)|\le \Ht_{m,p}(2^{-t}|A|)$ and returning to the bound \eqref{berr.alphar.bound}, we have
\begin{eqnarray*}
\max\left(\frac{\normi{\D A}}{\normi{A}},\frac{\normi{\D E}}{\normi{E}}\right)
  &\le& \frac{\normi{ \Ht_{m,p}(2^{-t}|A|) } }{ \normi{2^{-t}A}^\delta }\\
  &=&  \frac{(m+p)!@m!}{(2m+p)!(2m+p+1)!}@@\frac{ \normi{@|2^{-t}A|^{2m+p+1}} }{\normi{2^{-t}A}^\delta}+\cdots.  
\end{eqnarray*} 
Thus, we choose $t$ so that the first term of the right-hand side does not exceed
$u$. That is,
\begin{equation}\label{scal.parm.t}
 t = \max\left( \left\lceil \log_2\left(  \frac{(m+p)!@m!}{(2m+p)!(2m+p+1)!} \frac{ \normi{@|A|^{2m+p+1}} }{ u \normi{A}^\delta  } \right) / (2m+p+1{-}\delta) \right\rceil , 0 \right).
\end{equation}
We now take the scaling parameter 
\begin{equation}\label{s.star}
s^* = \max(s, t),
\end{equation}
 for which it is evident that the \be\ bound in \eqref{berr.alphar.bound} does not exceed $u$ in exact arithmetic.

The matrix powers in the sequence $\normi{A^r}$ given in \eqref{def-alphap} are not computed explicitly before taking the 1-norm. Instead, $\normi{A^r}$ is estimated through several actions of $A^r$ on specific vectors using the block 1-norm estimation algorithm of Higham and Tisseur \cite{hiti00n}, which requires only $O(n^2)$ operations. Furthermore, quantities of the form $\normi{|A|^k}$, as in \eqref{scal.parm.t}, can be
computed exactly in $O(n^2)$ operations exploiting the identity
$
\normi{@|A|^k} = \normo{@|A^T|^ke},
$
where $e=[1,1,\cdots,1]^T$ \cite[sect.~5]{alhi09a}.

\begin{table}
	\caption{Upper bounds on $\kappa_A(D_{m_i}(A))$ corresponding to the parameter settings in Table~\ref{tab:theta}.}
	\label{tab:bound}
	\centering
\scalebox{.85}{
\begin{tabularx}{\textwidth+0.2cm}{@{\extracolsep{\fill}}c@{\hspace{.6cm}}cccccccc}
\toprule[1pt]
$p$ & $m_0=1$ & $m_1=2$ & $m_2=3$ & $m_3=4$ & $m_4=6$ & $m_5=8$ & $m_6=10$ &$m_7=12$  \\
\midrule 
 1  & 1.00  & 1.00  & 1.03  & 1.13  & 1.86  & 4.78  & 1.79e1 & 8.98e1   \\
 2  & 1.00  & 1.00  & 1.04  & 1.17  & 2.10  & 5.68  & 2.18e1 & 1.10e2   \\
 3  & 1.00  & 1.00  & 1.05  & 1.22  & 2.39  & 6.68  & 2.61e1 & 1.32e2   \\
 4  & 1.00  & 1.01  & 1.07  & 1.28  & 2.69  & 7.78  & 3.08e1 & 1.56e2   \\
 5  & 1.00  & 1.01  & 1.10  & 1.38  & 3.02  & 8.98  & 3.60e1 & 1.83e2   \\
 6  & 1.00  & 1.02  & 1.14  & 1.52  & 3.38  & 1.03e1 & 4.17e1 & 2.11e2   \\
 7  & 1.00  & 1.03  & 1.20  & 1.69  & 3.75  & 1.17e1 & 4.77e1 & 2.42e2   \\
 8  & 1.00  & 1.04  & 1.28  & 1.81  & 4.14  & 1.31e1 & 5.40e1 & 2.74e2   \\
 9  & 1.00  & 1.07  & 1.42  & 1.93  & 4.56  & 1.47e1 & 6.07e1 & 3.07e2   \\
10  & 1.00  & 1.11  & 1.51  & 2.06  & 4.98  & 1.62e1 & 6.76e1 & 3.42e2   \\
\bottomrule[1pt]
\end{tabularx} 
              }
\end{table}

The other potential concern is the quality of the computed solution $R^{(p)}_m$ to the multiple right-hand side
linear system $D_m(A)R^{(p)}_m = N_m(A)$, assuming that $A\in\Omega^{(n)}_{m,p}$ and $\a_r(A)\le\th_{m,p}$. Since $\rho(A)\le\a_r(A)$ always holds and $\th_{m,p}<\nu_{m,p}$ applies for the values of $m$ and $p$ of interest, the coefficient matrix $D_m(A)$ is nonsingular as all its eigenvalues lie within the disc centered at the origin with radius $\th_{m,p}$. Moreover, the function $D_{m}(z)^{-1}$ has an absolutely convergent power series expansion 
$D_{m}(z)^{-1}=\sum_{i=0}^{\infty}b_i z^i$ inside this disc.
We aim to bound the condition number of the coefficient matrix with respect to a suitable matrix norm. This will help to determine an appropriate range for selecting the values of $\th_{m,p}$. Following the analysis of Al-Mohy and Higham \cite[sect.~5]{alhi09a}, for a given $\epsilon > 0$ there exists a consistent matrix norm associated with the matrix $A$, denoted by $\|\cdot \|_A$  such that
\begin{equation*}
\| A \|_A \leq \rho(A) + \epsilon \leq \alpha_r(A) + \epsilon.
\end{equation*}
Hence, the corresponding condition number satisfies
\begin{eqnarray}
  \kappa_A(D_{m}(A)) &=& \| D_{m}(A) \|_A \| D_{m}(A)^{-1} \|_A \nonumber\\
    &\leq& \left(
    \frac{m!}{(2m+p)!}
    \sum_{i=0}^{m} \frac{(2m + p - i)!}{i!(m - i)!} (\alpha_r(A) + \epsilon)^i
    \right) 
    \sum_{i=0}^{\infty} |b_i| (\alpha_r(A) + \epsilon)^i \nonumber  \\
    &\leq& \left(
    \frac{m!}{(2m+p)!}
    \sum_{i=0}^{m} \frac{(2m + p - i)!}{i!(m - i)!} (\th_{m,p} + \epsilon)^i
    \right) 
    \sum_{i=0}^{\infty} |b_i| (\th_{m,p} + \epsilon)^i. 
    \label{cond.bound} 
\end{eqnarray}
We choose $\epsilon=u$ and evaluate the bound \eqref{cond.bound} for various values of $m$ and $p$ as presented in Table~\ref{tab:bound}, corresponding to the parameter settings in Table~\ref{tab:theta}. 
\section{Computational cost analysis and parameter selection}
\label{sect.cost.para}
In the previous section, we described how to determine the optimal scaling parameter $s^*$.
Here, we focus on selecting the optimal degrees for the $[m/m]$ diagonal \Pant\ $\cR^{(p)}_m$ to $\varphi_p$, and we analyze the overall computational cost of the algorithm in terms of the equivalent number of matrix multiplications. The algorithmic parameters are then chosen to minimize this cost.

First, we use the Peterson--Stockmeyer scheme~\cite{past73} in line \ref{lin.cost.pade} of \Alg~\ref{alg.basic} to evaluate the numerator- and denominator polynomials of $\cR^{(p)}_m(A)=D_m(A)^{-1}N_m(A)$. We have
\begin{equation}\label{ps.scheme}
q(A) = \sum_{k=0}^{\nu} B^{[q]}_k(A)\cdot (A^\tau)^k, \quad 
\nu = \left\lfloor \frac{m}{\tau} \right\rfloor,
\end{equation}
where the polynomial $q$ denotes either $N_m$ or $D_m$ and
\[
B_k^{[q]}(A) = 
\begin{cases}
\displaystyle \sum_{j=0}^{\tau-1} c^{[q]}_{\tau k+j} A^j, & k = 0, 1, \dots, \nu - 1, \\[10pt]
\displaystyle \sum_{j=0}^{m-\tau\nu} c^{[q]}_{\tau \nu+j} A^j, & k = \nu.
\end{cases}
\]
Fasi \cite[sect.~3]{fasi19} analyzes the number of matrix multiplications required to simultaneously evaluate $N_m$ and $D_m$, which is:
\begin{equation}\label{Cm,pade}
 \pi_m(\tau)= \tau-1 + 2\left(\left\lfloor \frac{m}{\tau} \right\rfloor - \delta_{m,\tau}\right), \quad
 \delta_{m,\tau} = \begin{cases}
 	 1, &  \text{if } m \mid \tau, \\
 	 0, &  \text{otherwise}.
 \end{cases}
\end{equation}
He also shows that the minimum is attained at $\tau_*$ dividing $m$, where $\tau_*=\left\lfloor\sqrt{2m}\right\rfloor$ or $\tau_*=\left\lceil\sqrt{2m}\,\right\rceil$ \cite[Lem.~2]{fasi19}. 
The sequence $\pi_m(\tau_*)$ is nondecreasing, so we are interested in the largest $m$ values among those with the same $\pi_m(\tau_*)$. These values represent the optimal degrees of the evaluation scheme \eqref{ps.scheme} and are
given by the sequence \cite[Eq.~(19)]{fasi19}
\begin{equation}\label{m.opt.orders}
m_i:=\left\lfloor \frac{(i+3)^2}{8} \right\rfloor,\quad i=0,\,1,\,2,\cdots,
\end{equation}
and, therefore, $\pi_{m_{i}}(\tau_*) = i$. Solving \eqref{m.opt.orders} for $i$ gives
\begin{equation}\label{mi.pi}
 \left\lceil \sqrt{8(m_i+1)} - 3\right\rceil = i+1 = \pi_{m_{i}}(\tau_*)+1.
\end{equation}

Second, the solution of the multiple right-hand side linear system for the \Pant\ requires $8n^3/3$ flops, which is equivalent in operation count to $4/3$ matrix multiplications. Third, the first recovering phase in line~\ref{lin.phase1} of \Alg~\ref{alg.basic} that uses the recurrence \eqref{rec.pade} requires $p$ matrix multiplications. Finally, the last recovering phase between line \ref{lin.phase2.on} and line \ref{lin.phase2.off} that invokes the recurrence \eqref{double.formula} requires 
$s^*(p+1)$ matrix multiplications, where $s^* = \max(s, t)$ is defined in \eqref{s.star}. Therefore, the total cost using the optimal degrees \eqref{m.opt.orders} is equivalently
\begin{equation}
\label{cost.total1}
 C_{m_i,s^*} = i + p + \frac{4}{3} + s^*(p+1)
\end{equation}
matrix multiplications.
Since the smallest value of the nonnegative scaling parameter $s$ such that $2^{-s}\a_r(A)\le\th_{m,p}$ is 
$s = \max \bigl( \left\lceil \log_2 \left( \a_r(A)/\th_{m,p} \right) \right\rceil, 0 \bigr)$, the cost function becomes
\begin{equation}
\label{cost.total2}
 C_{m_i,r} = i + p + \frac{4}{3} + \max \left( \left\lceil \log_2 \left( \a_r(A)/\th_{m,p} \right) \right\rceil, t \right)(p+1),
\end{equation}
which, for a given matrix $A$, depends on the degree $m_i$ and the parameter $r$ from~\eqref{def-alphap}.
Our goal is to minimize the cost function over the index pair $(m_i, r)$. First, we set an upper bound for $m_i$, denoted by $m_{\max}$, which limits $r$ to the range between 2 and $r_{\max}$ as defined in \eqref{r_m}. Next, we determine $i_{\max}$ associated with $m_{\max}$ from \eqref{mi.pi}. 
If $m_{\max}$ is not initially among the $m_i$ values, we use \eqref{m.opt.orders} to adjust it to the nearest smaller value, $m_{i_{\max}}$, ensuring optimality of the evaluation scheme.
Finally we seek a pair $(i^*,r^*)$ that minimizes $C_{m_i,r}$ for $i=0\colon i_{\max}$ and $r=2\colon r_{\max}$, with each $r$ constrained by $2m_i+\phat+1\ge r(r-1)$. 

Based on the condition number bounds for the denominator polynomial $D_m(A)$ listed in Table \ref{tab:bound}, we recommend setting $m_{\max}$ to 12 and $\th_{m,p}=\th_{m,7}$ for $p>7$ to keep the condition number reasonably small. This adjustment does not compromise the \be\ bound \eqref{berr-bound2}, thanks to the fact that $\th_{m,p}>\th_{m,7}$ for all $p>7$.
\section{Proposed and existing algorithms}
\label{sec.algs}
To set the stage for comparison, we start with presenting  the newly proposed \alg, followed by a brief comparative review of existing \alg s.
\subsection{The new algorithm}
Given a matrix $A \in \C^{n \times n}$ and a positive integer $p$, \Alg~\ref{alg.phi} simultaneously evaluates the matrix $\varphi$-functions $\varphi_j$ for $j = 0\colon p$. It serves as a comprehensive implementation based on the analysis developed in the previous sections.

\begin{algorithm}
\caption{Scaling and recovering algorithm for the matrix $\varphi$-functions.}
\label{alg.phi}
\textbf{Inputs:} Matrix $A \in \mathbb{C}^{n \times n}$ and $p\in\N^+$. \\
\textbf{Outputs:} Approximations for $\varphi_0(A)=\e^A$, $\varphi_1(A)$, $\varphi_2(A)$, \dots, $\varphi_p(A)$.
\begin{algorithmic}[1]
    \State $m_{\max} = 12$ \Comment{Default} 
    \State $i_{\max} = \left\lceil \sqrt{8(m_{\max}+1)} - 3 \right\rceil - 1$
    \Comment{See \eqref{mi.pi}}
    \State $m_{\max} \gets \left\lfloor (i_{\max} + 3)^2 / 8 \right\rfloor$ \Comment{Adjust to the nearest if $m_{\max}$ is not the default.}
    \If{$p > 7$}
        \State $\th_{m,p} \gets \th_{m,7}$ for all $m$
    \EndIf
    \State $\phat = p$
    \If{$\th_{m_{\max},p} < 1$}
        \State $\phat \gets 0$
    \EndIf
    \State $r_{\max} = \left\lfloor (1 + \sqrt{5 + 8m_{\max} + 4\phat}\,)/2    
    \right\rfloor$  
     \Comment{See \eqref{r_m}}
    \State Evaluate $\a_r(A)$ for $r = 2 : r_{\max}$ using the 1-norm estimator \cite{hiti00n}. 
    \State Initialize $M$ to be an $(i_{\max} + 1) \times (r_{\max} - 1)$ zero matrix.  
    \For{$i = 0 \colon i_{\max}$}
        \State $m_i = \left\lfloor (i + 3)^2 / 8 \right\rfloor$, $\phat \gets p$
        \If{$\th_{m_i, p} < 1$}
            \State $\phat \gets 0$
        \EndIf
        \State Evaluate the scaling parameter $t$ for $m_i$, $p$, and $\phat$, using \eqref{scal.parm.t}.
        \For{$r = 2 \colon r_{\max}$} \label{alg.phi-line.r2}
            \If{$2m_i + \phat + 1 \geq r(r - 1)$} 
                \State $M(i + 1, r-1) = C_{m_i, r}$ \Comment{Cost function \eqref{cost.total2}}
            \EndIf
        \EndFor
    \EndFor
    \State Index the smallest positive element in $M$, denoting it $M(i^* + 1, r^*-1)$.
    \State $m = m_{i^*}$, $\tau_* = \left\lfloor \sqrt{2m} \right\rfloor$
    \If{$\pi_m(\tau_*) \ne i^*$}
        \State $\tau_* \gets \left\lceil \sqrt{2m} \right\rceil$ \Comment{See \eqref{Cm,pade}}
    \EndIf
    \State $s^* =  \left(M(i^* + 1, r^*-1)-i^*-p-4/3\right)/(p+1)$ \Comment{See \eqref{cost.total1}}
    \State $A \gets A / 2^{s^*}$
    \State Evaluate $\cR^{(p)}_m\equiv \cR^{(p)}_m(A)$ to $\varphi_p(A)$ using \eqref{pade.Nm.Dm} and \eqref{ps.scheme}, with $\tau = \tau_*$.
    \State Invoke the recurrence \eqref{rec.pade} to recover $\cR^{(j)}_m\equiv \cR^{(j)}_m(A)\approx\varphi_j(A)$.
    \For{$i = 1 \colon s^*$}
        \For{$j = p\colon\!-1\,\colon1$}
            \State $\cR^{(j)}_m \gets 2^{-j} \left( \cR^{(0)}_m \cR^{(j)}_m + \sum_{k=1}^{j} \cR^{(k)}_m / (j-k)! \right)$ 
            \Comment{See \eqref{double.formula}} 
        \EndFor
        \If{$A$ is (quasi-)triangular}
            \State Invoke \cite[Code Fragments.~2.1 \& 2.2]{alhi09a} for $\cR^{(0)}_m$.
        \Else
            \State $\cR^{(0)}_m \gets \left(\cR^{(0)}_m\right)^2$
            \Comment{Repeated squaring}
        \EndIf
    \EndFor
\end{algorithmic}
\end{algorithm}

The \alg\ begins by determining the optimal Pad\'e degree $m$ and scaling parameter $s^*$ through the minimization of the cost function $C_{m_i,r}$ in~\eqref{cost.total2}, which measures the total cost in the equivalent number of matrix multiplications. This cost function involves the sequence $\a_r(A)$ and the parameter $t$ from~\eqref{scal.parm.t}, both of which rely on the \be\ analysis. The algorithm then evaluates the $[m/m]$ \Pant\ to $\varphi_p(2^{-s^*}A)$ using~\eqref{pade.Nm.Dm} and~\eqref{ps.scheme} with $\tau = \tau_*$, and it obtains the $[m+p{-}j/m]$ \Pant s to $\varphi_j(2^{-s^*}A)$, for $0 \le j < p$, implicitly via the backward recurrence~\eqref{rec.pade}. The effect of scaling is subsequently reversed by using the double-argument formula~\eqref{double.formula}. When the input matrix is (quasi-)triangular, the algorithm exploits this structure in the recovering phase to enhance stability. 

The method is optimized for IEEE double-precision arithmetic and balances accuracy and cost through adaptive parameter selection and structure-aware computation.
\subsection{Existing algorithms}
Inspired by the scaling and squaring method for the matrix exponential \cite{mova78}, the scaling and recovering method for the $\varphi$-functions was first proposed, to the best of our knowledge, by Hochbruck, Lubich, and Selhofer \cite{hls98}. In their approach, the authors suggest scaling the matrix by a power of two so that the norm of the scaled matrix is less than $1/2$, using the $[6/6]$ \Pant\ to $\varphi_1(z)$, and applying the double-argument formula \eqref{double.formula} for $p = 1$ to compute $\varphi_1(A)$. Neither the choice of the scaling parameter nor the degree of the \Pant\ is justified in this preliminary investigation.

The most notable works aligned with ours in \Alg~\ref{alg.phi} are those by Berland, Skaflestad, and Wright~\cite{bsw07}, and by Skaflestad and Wright~\cite{skwr09}. The latter extends the former by providing a more comprehensive description of the method, along with a forward error analysis and a detailed cost analysis.
The algorithm of Skaflestad and Wright~\cite[Alg.~1]{skwr09} simultaneously evaluates the matrix $\varphi$-functions\footnote{The MATLAB code \phipade\ associated with the algorithm does not output $\varphi_0(A)$.}
$\varphi_j$ for $j = 0\colon p$. The algorithm scales the input matrix to bring its norm close to one, computes certain powers of the scaled matrix, and reuses them to \emph{independently} evaluate the diagonal \Pant s to each matrix $\varphi$-function, resulting in distinct numerator- and denominator polynomials for different functions. Finally, it applies the double-argument formula~\eqref{double.formula} to reverse the effect of scaling.
The evaluation of the $p+1$ \Pant s constitutes the most computationally expensive component of the algorithm---each approximant evaluation additionally requires a matrix inversion and matrix multiplications, despite the reuse of computed matrix powers.

The algorithm of Al-Mohy~\cite[Alg.~4.1]{almo24} computes the exponential of block triangular matrices by exploiting their structure, without explicitly forming the full block matrix. Given the matrices $A$, $J$, and $E$ defined in Theorem~\ref{Thm.block.phi}, his algorithm simultaneously computes $\e^A$, $\e^J$, and the matrix in \eqref{D.exp.mat}. While the algorithm is effective for computing matrix exponentials, it is not specifically tailored for the evaluation of matrix $\varphi$-functions. 
As general-purpose methods, the Schur--Parlett algorithms~\cite{dahi03}, \cite{hili21}, which require either computation of the derivatives or use of variable-precision arithmetic, are not expected to be as efficient or accurate as specialized algorithms for matrix $\varphi$-functions. 

\section{Numerical experiment}
\label{sect.experiment}

This section evaluates the performance of the proposed algorithm for computing matrix $\varphi$-functions in comparison with the existing ones. The following \alg s are tested:
\begin{itemize}
\item
\phifunm: our MATLAB implementation of \Alg~\ref{alg.phi};
\item
\phipade: the implementation from the EXPINT package~\cite{bsw07}, which realizes the algorithm of Skaflestad and Wright~\cite[Alg.~1]{skwr09}, executed in two configurations:
\begin{itemize}
\item \phipadedft: the default setting, which uses the diagonal $[7/7]$ Pad\'e approximant;
\item \phipadeopt: the adaptive setting proposed in \cite[Alg.~1]{skwr09}, which seeks the optimal Pad\'e degree $m$ in the range $3 \le m \le 13$ to minimize the leading asymptotic computational cost;
\end{itemize}
\item
\expmblktri: the algorithm of Al-Mohy \cite{almo24}, designed for computing the exponential of block triangular matrices; and
\item
\expm: the MATLAB built-in function for the matrix exponential \cite{alhi09a}, intended solely for the computation of $\varphi_0(A) = \e^A$.
\end{itemize}
Optimized for IEEE double precision, \phifunm\ does not employ the mixed-precision Paterson--Stockmeyer scheme~\cite{liu25}, which is primarily relevant in variable-precision arithmetic.
The experiments were run using the 64-bit GNU/Linux version of MATLAB 24.2 (R2024b Update 3) on a desktop computer
equipped with an Intel i5-12600K processor running at 3.70 GHz
and with 32GiB of RAM. The code that produces the results in this section is available on GitHub.\footnote{\url{https://github.com/xiaobo-liu/phi\_funm}}
The test matrices consist of two sets.
\begin{enumerate}
	\item[\textbf{Set 1}] 108 \emph{nonnormal} matrices taken from the built-in groups of Anymatrix~\cite{himi21} and from a collection\footnote{\url{https://github.com/xiaobo-liu/matrices-expm}} of matrices \cite{liu25} commonly used in the matrix function literature \cite{alhi09a}, \cite{ahl22}, \cite{fahi19}.
	\item [\textbf{Set 2}] Hessenberg (tridiagonal) matrices constructed from the Arnoldi (Lanczos) process within Krylov methods for computing the action of $\varphi_j(K)$ on the vector of all ones, where the matrix $K$ is listed in Table~\ref{tab:testmat}.
\end{enumerate}
\begin{table}[t]
	\caption{Summary of test matrices from~\cite{alhi11} and the SuiteSparse collection~\cite{dahu11}.}\label{tab:testmat}
	\centering
	\scalebox{.85}{
	\begin{tabularx}{\textwidth+0.4cm}{@{\extracolsep{\fill}}@{\hspace{2pt}}l@{\hspace{.5cm}}rr@{\hspace{.4cm}}l}
		\toprule[1pt] 
		Matrix $K$  & Size & Nonzeros & Description  \\ [0.5ex] 
		\midrule
		bcspwr10 & 5,300 & 21,842 &  Power network problem\\
		gr\_30\_30 & 900 & 7,744 &	Discretization of Laplacian by a nine-point
		stencil \\
		helm2d03 & 392,257 & 2,741,935 & Helmholtz equation on a unit square \\
		orani678 &	2,529 & 90,158	 & Economic problem \\
		poisson99 & 9,801 &	48,609 & Finite diﬀerence discretization of the 2D Laplacian\\
		\bottomrule[1pt] 
	\end{tabularx}
}
\end{table}
	
For each computed matrix $\varphi_j$-function $\widehat{X}_j$ of $A$, we assess its accuracy via the normwise relative forward error $\normi{\varphi_j(A)-\widehat{X}_j} / \normi{\varphi_j(A)}$, where the reference solution $\varphi_j(A)$ is obtained by invoking the~\t{expm\_mp} function~\cite{fahi19} in 200 digits of precision for the exponential of $W$ in Theorem~\ref{Thm.block.phi} and then extracting the respective blocks. We also gauge the forward stability of the algorithms by reporting $\kappa_{\varphi_j}(A)u$, where $\kappa_{\varphi_j}(A)$ is the $1$-norm condition number~\cite[sect.~3]{high:FM} estimated by applying the \t{funm\_condest1} function of~\cite[Alg.~3.22]{high:FM} to \expm\ (for $j=0$) and \phifunm\ (for $j>0$).
\subsection{Accuracy and stability}

\begin{figure}
	\begin{subfigure}{1\linewidth}
		\centering
		\includegraphics[width=0.45\textwidth,height=3.5cm]{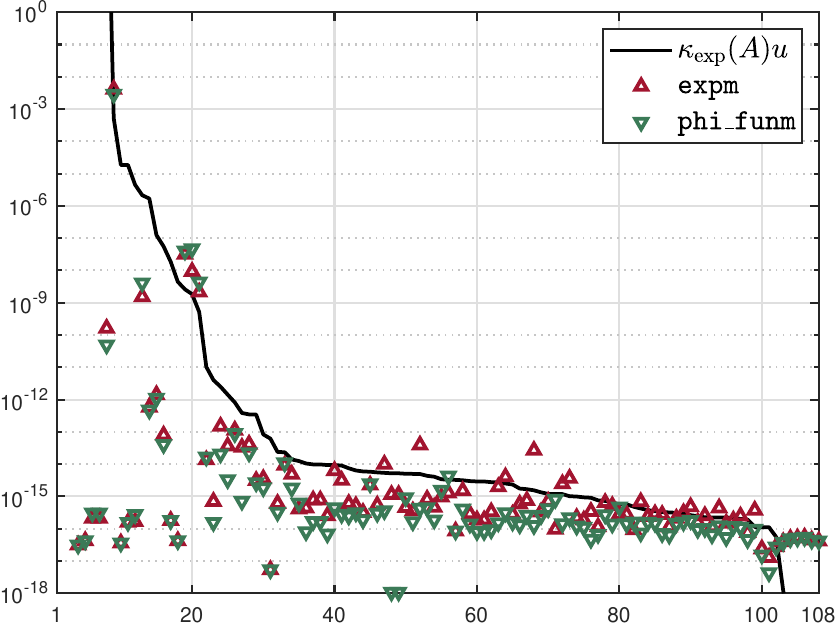}
		\hspace{1em}
		\includegraphics[width=0.45\textwidth,height=3.5cm]{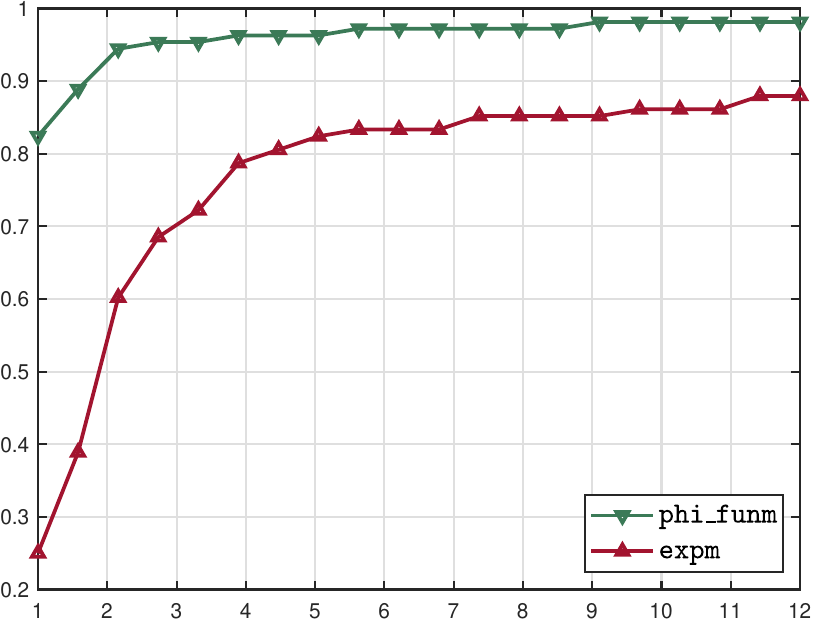}
		\caption{$j=0$.}
	\end{subfigure}
	\begin{subfigure}{1\linewidth}
		\centering
		\includegraphics[width=0.45\textwidth,height=3.5cm]{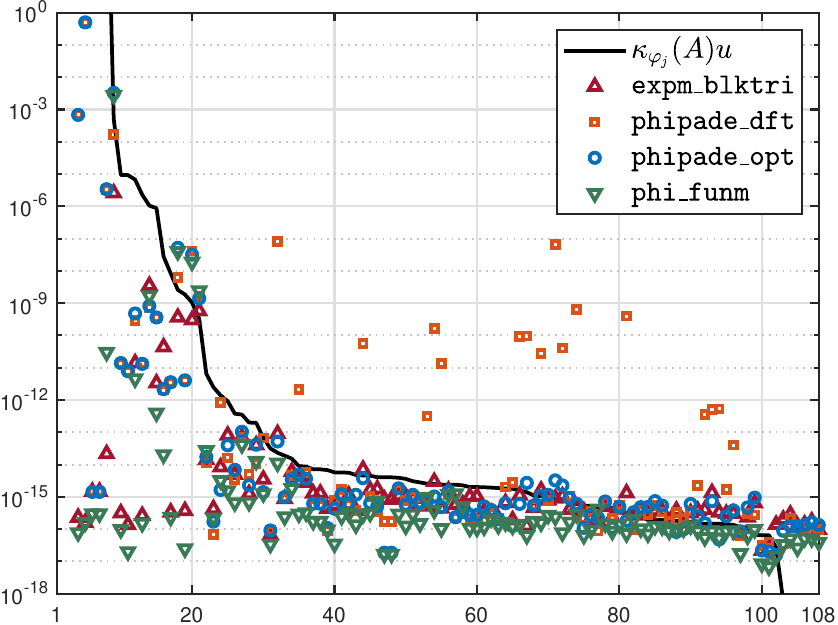}
		\hspace{1em}
		\includegraphics[width=0.45\textwidth,height=3.5cm]{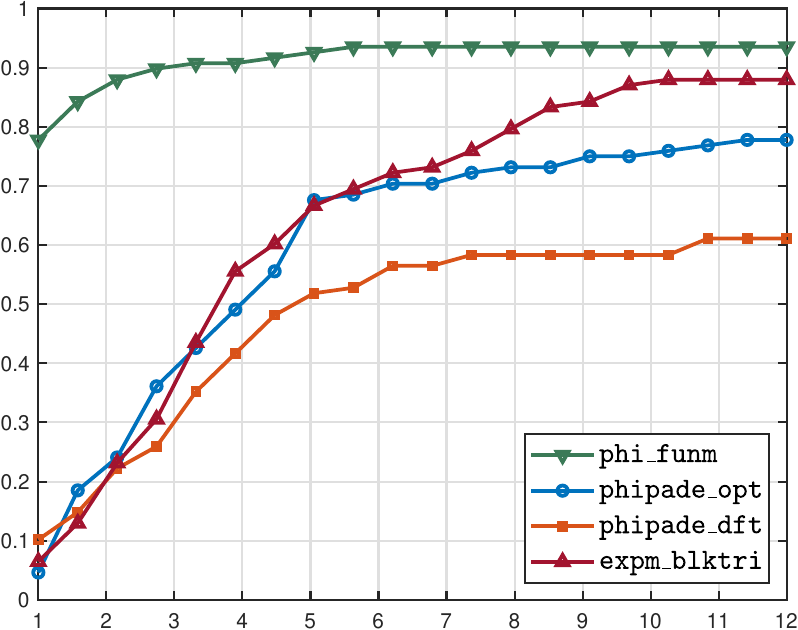}
		\caption{$j=1$.}
	\end{subfigure} \\
	\begin{subfigure}{1\linewidth}
		\centering
		\includegraphics[width=0.45\textwidth,height=3.5cm]{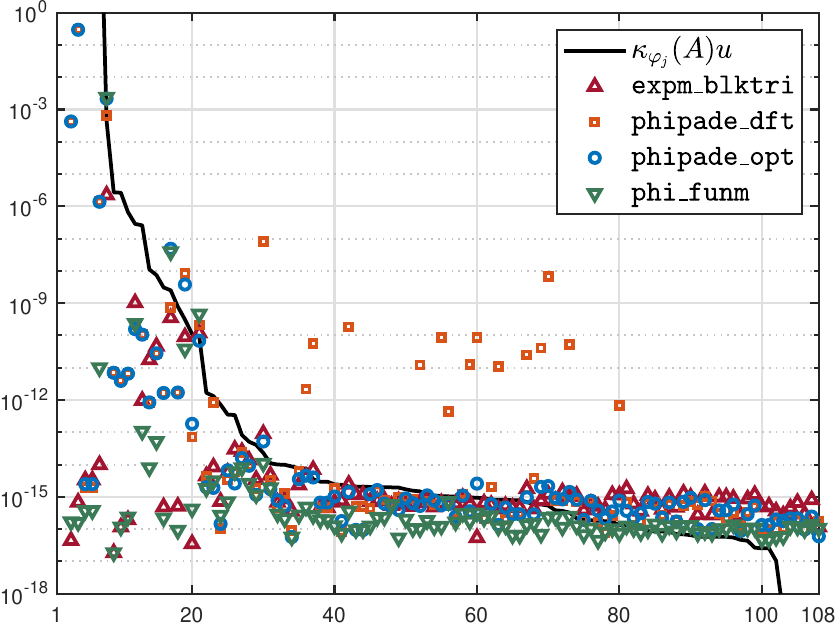}
		\hspace{1em}
		\includegraphics[width=0.45\textwidth,height=3.5cm]{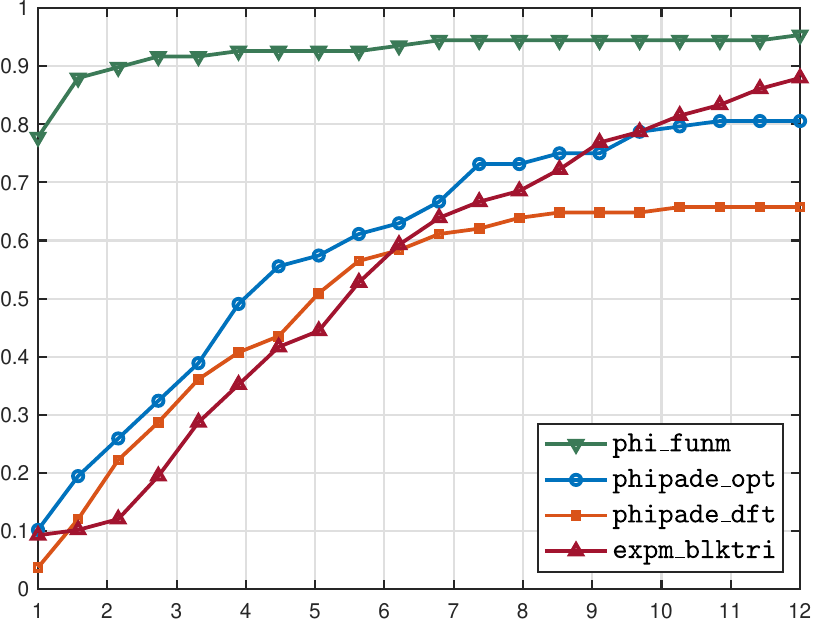}
		\caption{$j=4$.}
	\end{subfigure} \\
	\begin{subfigure}{1\linewidth}
		\centering
		\includegraphics[width=0.45\textwidth,height=3.5cm]{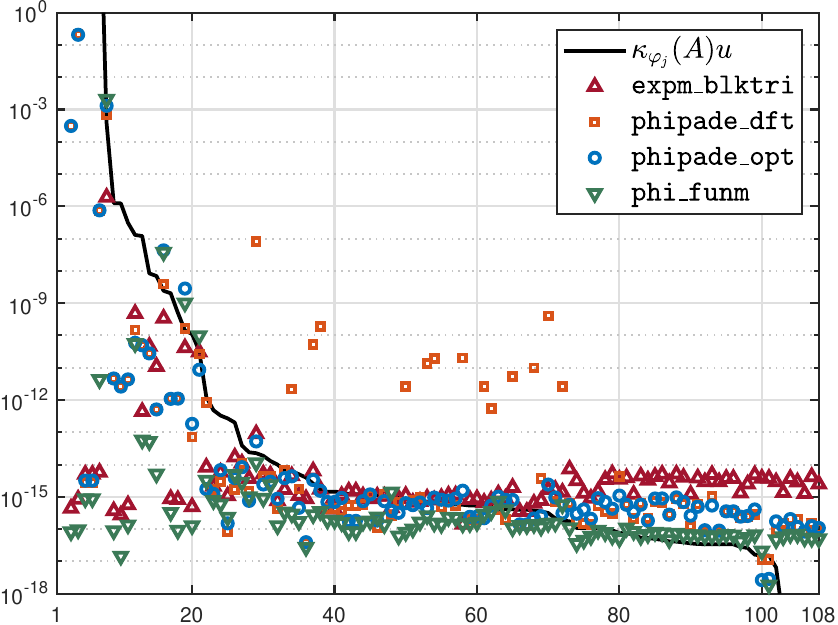}
		\hspace{1em}
		\includegraphics[width=0.45\textwidth,height=3.5cm]{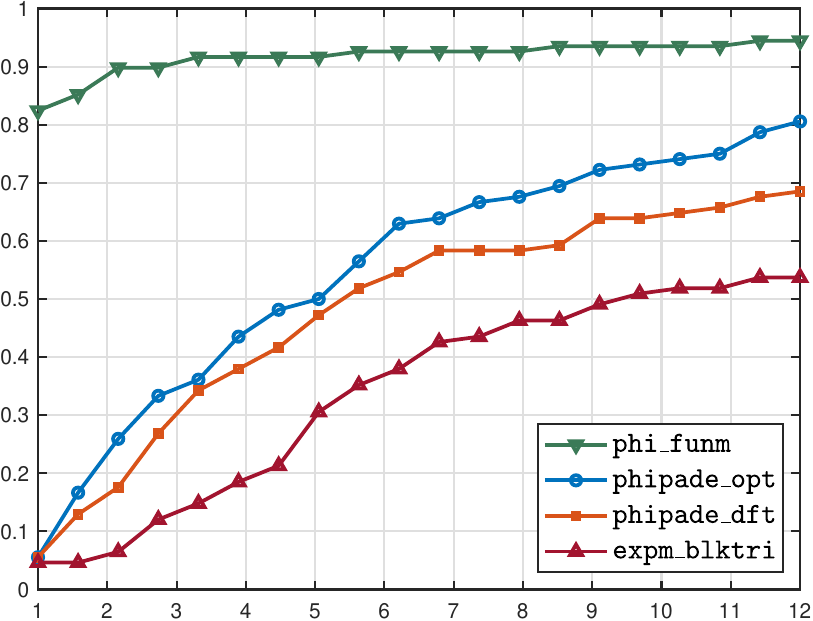}
		\caption{$j=7$.}
	\end{subfigure} \\
	\begin{subfigure}{1\linewidth}
		\centering
		\includegraphics[width=0.45\textwidth,height=3.5cm]{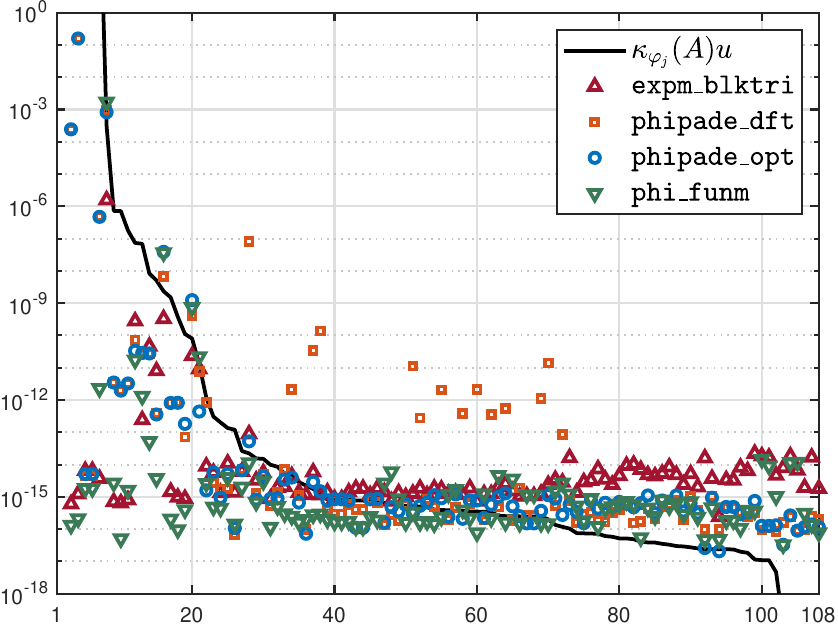}
		\hspace{1em}
		\includegraphics[width=0.45\textwidth,height=3.5cm]{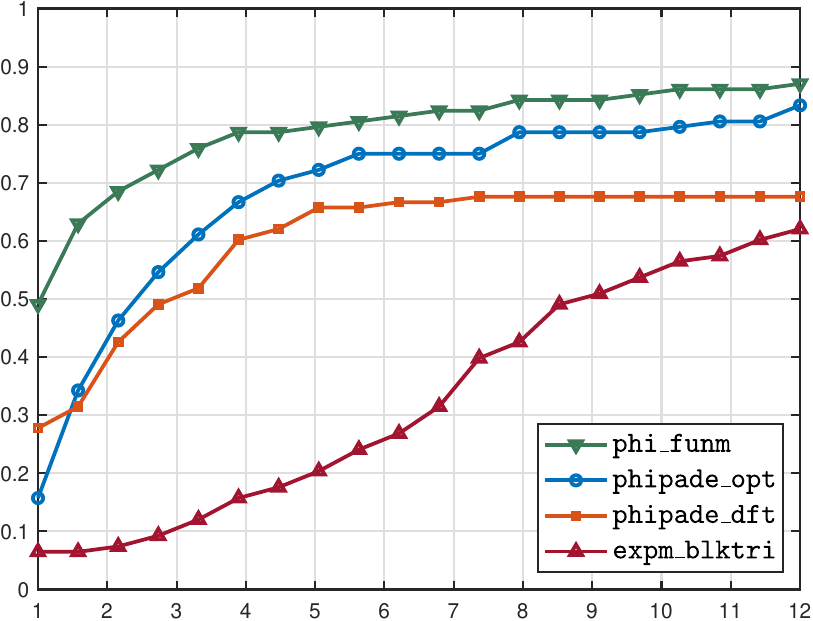}
		\caption{$j=10$.}
	\end{subfigure}
	\vspace{-2em}
	\caption{Relative forward errors and corresponding performance profiles of the algorithms for computing $\varphi_j(A)$.}
	\label{fig.test-main-accuracy}
\end{figure}
In the first experiment, the test matrices are from Set 1 and have dimensions from $2\times 2$ to $41\times 41$, and most of them are real matrices of size $20\times 20$.

When a linear combination of the matrix $\varphi$-functions, as in the form of~\eqref{phi.lin.comb}, is required, all the algorithms aiming for $\varphi_p(A)$ can compute the matrix $\varphi$-functions simultaneously, while the variants of \phipade\ do not produce $\varphi_0(A)$.
Given the largest index $p = 10$, Figure~\ref{fig.test-main-accuracy} presents the relative forward errors
\[
\frac{\normi{\varphi_j(A) - \widehat{X}_j}}{\normi{\varphi_j(A)}}, \quad j \in \{0, 1, 4, 7, 10\},
\]
sorted in descending order of the condition number $\kappa_{\varphi_j}(A)$, together with the corresponding performance profiles~\cite{domo02}.
In the performance profiles, the $y$-coordinate of a given algorithm represents the frequency of test matrices for which its relative error is within a factor $\beta$ of the smallest error among all algorithms, where $\beta$ is the $x$-coordinate.

The results clearly show the superior accuracy of \phifunm\ over its competitors, especially for matrix $\varphi$-functions with small index $j$. 
It is also noteworthy that the algorithm of~\cite{skwr09} in its default setting (\phipadedft) is highly unstable for a number of well-conditioned problems, with errors exceeding the stability threshold by several orders of magnitude.

For $\varphi_0(A)=\e^A$, the comparison is made between \phifunm\ and \expm, in which case the former is distinctly more accurate than the latter. This is largely because \phifunm\ indirectly evaluates the $[m+p/m]$ \Pant\ to the exponential, which is of higher degree than the $[m/m]$ \Pant\ evaluated by \expm. It also reveals that the recurrence~\eqref{rec.pade} has been computed to high relative accuracy.

For $\varphi_j(A)$ with $j>0$, the stability of \phipadedft\ is hindered by its use of fixed-degree \Pant, which can lead to potential overscaling issues. 
In contrast, \phipadeopt, which allows flexibility in the parameter selection, exhibits much better forward stability.
The general-purpose algorithm \expmblktri\ delivers good accuracy when the index $j$ is small, but its performance deteriorates as $j$ grows. 
This is perhaps unsurprising, as the algorithm operates on larger and sparser matrices without exploiting the special structure within the assembled blocks.

\subsection{Asymptotic computational cost}
\label{sect.asymp-cost}

\begin{figure}
	\centering
	\begin{subfigure}{0.45\linewidth}
		\centering
		\includegraphics[width=\linewidth,height=4.5cm]{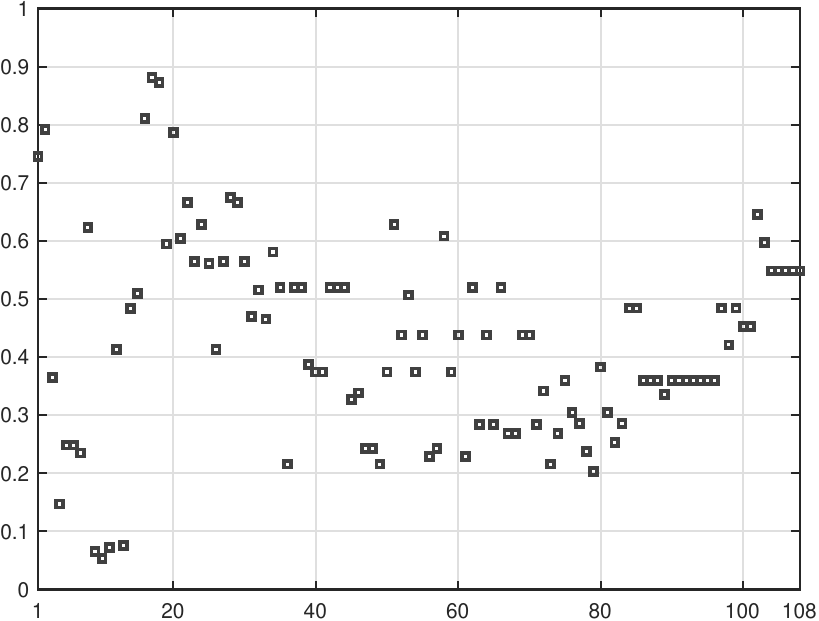}
		\caption{\phifunm\ over \phipadedft.}
	\end{subfigure}
	\hspace{1em}
	\begin{subfigure}{0.45\linewidth}
		\centering
		\includegraphics[width=\linewidth,height=4.5cm]{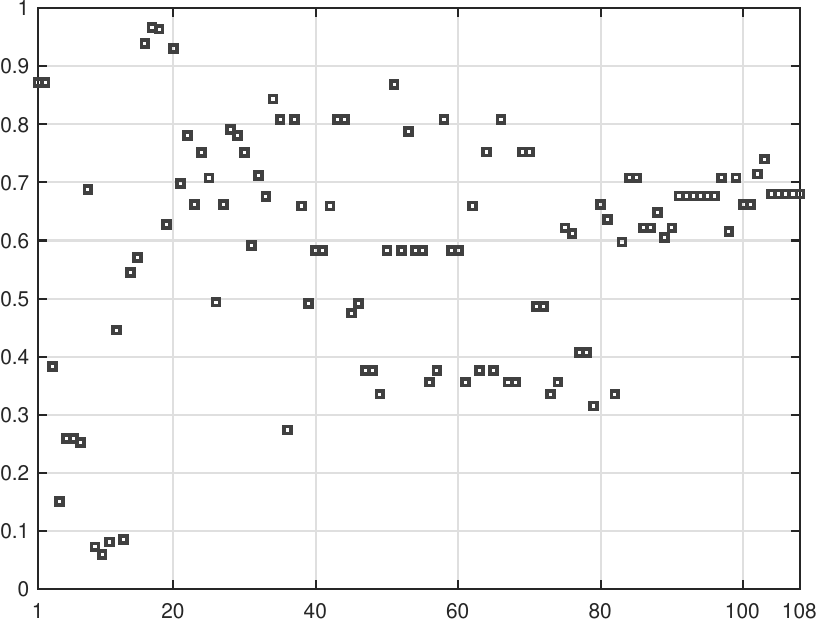}
		\caption{\phifunm\ over \phipadeopt.}
	\end{subfigure}
	\caption{Ratios of the asymptotic computational cost of \phifunm\ to \phipade.}
	\label{fig.test-main-cost}	
\end{figure}

Corresponding to the accuracy comparison presented in Figure~\ref{fig.test-main-accuracy}, the asymptotic computational costs of \phifunm\ and the variants of \phipade\ are measured in terms of the equivalent number of matrix products, as shown in Figure~\ref{fig.test-main-cost}.

It is observed that \phifunm\ is more efficient than the two variants of \phipade\ in every case, often reducing the cost by more than half.
Compared with \phipadedft, the cost-optimized variant \phipadeopt, has indeed narrowed the efficiency gap in many cases, but the advantage of \phifunm\ remains evident, as it still achieves costs that are $10\times$ to $20\times$ lower in several cases. 

This superiority of \phifunm\ in efficiency is mainly due to two improvements.
First, it performs the scaling in reliance on the $\a_r$-based sequence~\eqref{def-alphap} rather than $\normi{A}$, which makes it less prone to the overscaling issue and reduces the cost. 
Second, unlike \phipade, which evaluates \Pant s of the same degree to all $\varphi_j$ and thus produces varying numerators and denominators for different $\varphi_j$-functions, \phifunm\ adapts the \Pant\ degree per $\varphi_j$ while keeping the denominator polynomial fixed (see Corollary~\ref{Corol.pade}), saving at least $p$ invocations of the multiple linear systems solver.

\subsection{Runtime comparison and code profiling}
Building upon the asymptotic complexity analysis presented in section~\ref{sect.asymp-cost}, we now compare the execution time of \phifunm\ and the two variants of \phipade. 
To better understand the computational characteristics of \phifunm, we also perform its code profiling and report the execution time breakdown on problems of varying sizes.

In the runtime comparison, we use 55 test matrices from Set 1, with variable sizes parametrized by $n$.
We compare the execution time of the algorithms on these matrices with different dimensions.
With $n=20$, all algorithms take about $10^{-3}$ seconds, so the runtime difference, which is of at most one order of magnitude, is insignificant, and such small scale computations in MATLAB are often dominated by interpreter overhead.
We increase the problem size to $n=200, 500, 2500$ and compare the ratios of the execution time of \phifunm\ to the two variants of \phipade, respectively. Figure~\ref{fig.test-time} presents the results.
For problems of size $O(100)$, \phifunm\ and the two variants of \phipade\ show comparable speed in most cases. As the problem size grows, however, the lower asymptotic cost of \phifunm\ is reflected in the execution time: when $n=2500$, it is faster than \phipadedft\ or \phipadeopt\ in over 80\% of cases.
Notably, \phifunm\ is also more reliable, with runtimes never exceeding roughly twice that of the fastest algorithm, whereas its competitors can be up to $16\times$ slower.

\begin{figure}
	\centering
	\begin{subfigure}{0.45\linewidth}
		\centering
		\includegraphics[width=\linewidth,height=4.5cm]{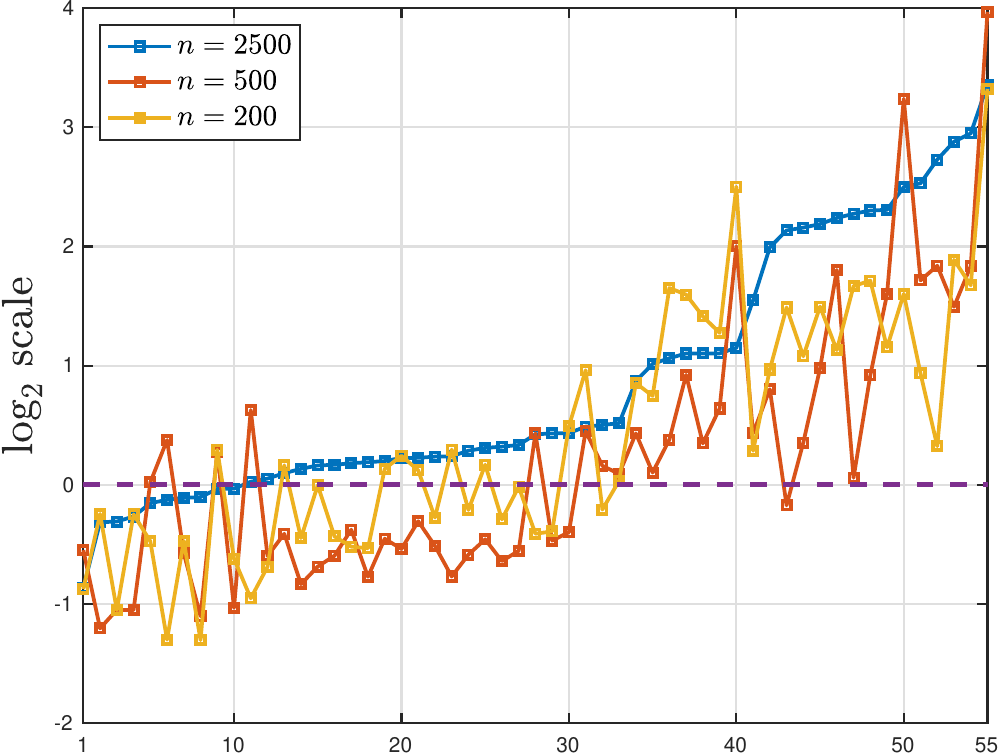}
		\caption{\phifunm\ vs \phipadedft.}
	\end{subfigure}
	\hspace{1em}
	\begin{subfigure}{0.45\linewidth}
		\centering
		\includegraphics[width=\linewidth,height=4.5cm]{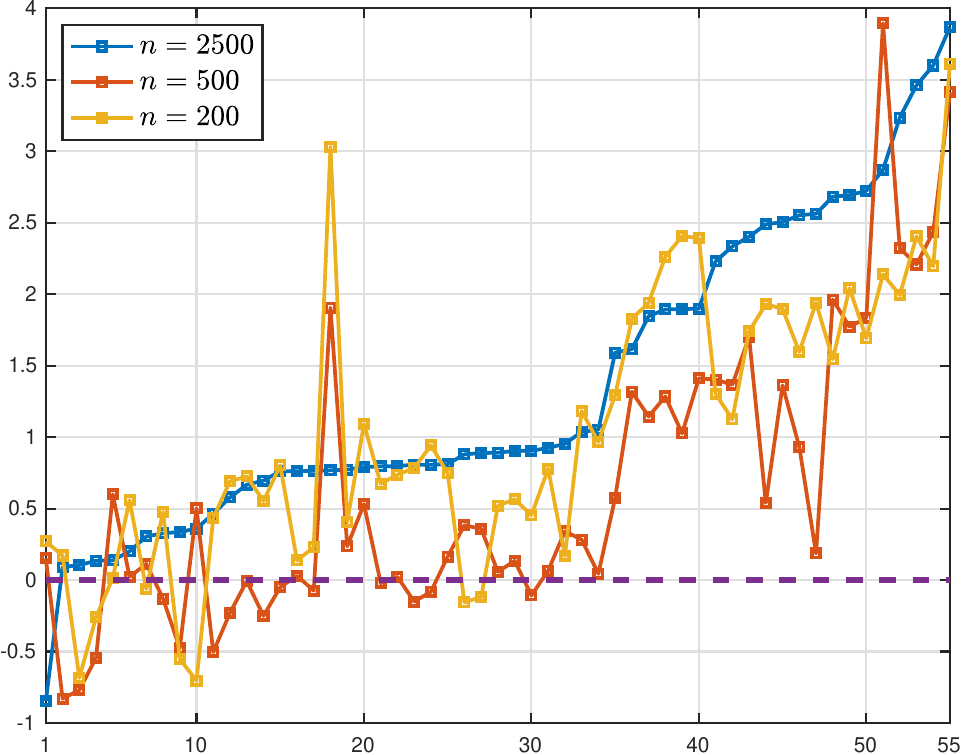}
		\caption{\phifunm\ vs \phipadeopt.}
	\end{subfigure}
	\caption{$\log_2$-speedup of \phifunm\ relative to \phipade. Positive values indicate \phifunm\ is faster.}
	\label{fig.test-time}	
\end{figure}

\begin{table}
	\caption{\phifunm\ profiling. Matrix multiplications in \Pant\ ($M_{\mathrm{eval}}$) and recovery phase ($M_{\mathrm{recv}}$), total runtime ($T_{\mathrm{tot}}$) in seconds, and time percentages for parameter selection ($P_{\mathrm{par}}$), \Pant\ ($P_{\mathrm{eval}}$), and recovery phase ($P_{\mathrm{recv}}$).}

	\label{tab:profiling}
	\centering
	\scalebox{.85}{
		\begin{tabularx}{0.85\textwidth}{@{\extracolsep{\fill}}r@{\hspace{1.0cm}}rrrrrrr}
			\toprule[1pt]
			& $n$ & $M_{\mathrm{eval}}$ & $M_{\mathrm{recv}}$ & $P_{\mathrm{par}}$ & $P_{\mathrm{eval}}$ & $P_{\mathrm{recv}}$ & $T_{\mathrm{tot}}$ \\
			\midrule
			\verb|A| &  20 &  17 & 55 & 49.4\% & 33.5\% & 17.1\% &  0.0  \\
			& 200 &  16 & 132 & 4.0\% & 12.1\% & 83.9\% &  0.1  \\
			& 500 &  16 & 165 & 0.7\% & 9.3\% & 90.1\% &  0.6  \\
			& 2500 &  17 & 209 & 0.8\% & 6.3\% & 92.8\% & 43.9  \\
			\midrule
			\verb|B| &  20 &  16 & 11 & 62.6\% & 30.8\% & 6.6\% &  0.0  \\
			& 200 &  16 & 55 & 8.4\% & 15.9\% & 75.7\% &  0.0  \\
			& 500 &  16 & 66 & 1.9\% & 18.0\% & 80.1\% &  0.3  \\
			& 2500 &  17 & 88 & 1.8\% & 13.3\% & 84.8\% & 20.4  \\
			\midrule
			\verb|C| &  20 &  15 &  0 & 55.1\% & 44.9\% & 0.0\% &  0.0  \\
			& 200 &  16 &  0 & 19.4\% & 80.6\% & 0.0\% &  0.0  \\
			& 500 &  17 &  0 & 1.4\% & 98.6\% & 0.0\% &  0.5  \\
			& 2500 &  16 & 11 & 4.1\% & 54.7\% & 41.2\% & 17.5  \\
			\bottomrule[1pt]
		\end{tabularx}
	}
\end{table}

Table~\ref{tab:profiling} reports the execution time breakdown of \phifunm\ on three classes of matrices 
\begin{verbatim}
	A = anymatrix('gallery/circul', n);		   % circulant matrix
	B = anymatrix('gallery/triw', n, -2);  % upper triangular matrix
	C = anymatrix('core/vand', n);         % Vandermonde matrix
\end{verbatim}
where $n$ ranges from 20 to 2500. The first matrix is a circulant matrix whose first row contains the integers from $1$ to $n$, making its $1$-norm increase quadratically with $n$. The second matrix is upper triangular and has a condition number increasing rapidly with $n$. The third matrix is a Vandermonde matrix based on equally spaced points on $[0,1]$, so its $1$- and $\infty$ norms are equal to $n$.

The cost of parameter selection of \phifunm, which includes the $O(n^2)$ computational overhead from the norm estimations~\cite{hiti00n}, is typically the most expensive part for small matrices, but its weight becomes increasingly negligible as problem size grows. Moreover, the percentages $P_{\mathrm{eval}}$ and $P_{\mathrm{recv}}$ scale more consistently with the number of matrix multiplications $M_{\mathrm{eval}}$ and $M_{\mathrm{recv}}$, respectively, as the dimension increases.

\subsection{Hessenberg matrices from Krylov methods}
Finally, we examine the algorithms on Hessenberg matrices arising in the Krylov method for the action of $\varphi$-functions on operand vectors. 
The test matrices are generated from Set 2 via the Arnoldi iteration using the \texttt{arnoldi} routine from the Matrix Function Toolbox~\cite[App.~D]{high:FM}.
These matrices have been used in the literature that targets at accelerating exponential integrators~\cite{alhi11}, \cite{niwr12}. 
We use the Krylov subspace dimension $m=30$, as used in~\cite{niwr12}, as well as $m=80$, which might be required for very large and stiff systems.

\begin{table}
	\caption{Relative forward errors and computational cost (matrix multiplication equivalents) for \phifunm, \phipadedft, and \phipadeopt\ on $m\times m$ Hessenberg matrices from Set~2, for $\varphi_1$ and $\varphi_4$.}
	\label{tab:hess}
	\centering
	\setlength{\tabcolsep}{5pt}
	\scalebox{.78}{
			\begin{tabularx}{1.25\textwidth}{ll lc lc lc lc lc}
				\toprule[1pt] 
				\multicolumn{2}{c}{} & \multicolumn{2}{c}{ \verb|bcspwr10|} & \multicolumn{2}{c}{ \verb|gr_30_30|} & \multicolumn{2}{c}{ \verb|helm2d03|} & \multicolumn{2}{c}{ \verb|orani678|} & \multicolumn{2}{c}{ \verb|poisson99|}  \\[3pt]
				\multicolumn{2}{c}{$p=1$} & \hspace{3.5pt}Error & Cost & \hspace{3.5pt}Error & Cost & \hspace{3.5pt}Error & Cost & \hspace{3.5pt}Error & Cost & \hspace{3.5pt}Error & Cost  \\
				\midrule
				\multirow{3}{*}{\rotatebox{0}{$m = 30$}} & \phifunm & 3.2e-15 & 11.3 & 1.0e-15 & 12.3 & 1.4e-15 & 12.3 & 3.7e-16 & \phantom{1}8.3 & 7.5e-14 & 34.3 \\
				& \phipadedft & 5.3e-9 & 14.7 & 1.1e-9 & 16.7 & 1.9e-10 & 16.7 & 5.7e-16 & 14.7 & 8.2e-14 & 38.7 \\
				& \phipadeopt & 2.0e-15 & 13.7 & 3.3e-15 & 15.7 & 2.1e-15 & 15.7 & 7.4e-16 & 13.7 & 7.8e-14 & 35.7 \\
				\midrule
				\multirow{3}{*}{\rotatebox{0}{$m = 80$}} & \phifunm & 3.2e-15 & 11.3 & 1.0e-15 & 12.3 & 1.5e-15 & 12.3 & 4.9e-16 & \phantom{1}8.3 & 9.1e-14 & 34.3 \\
				& \phipadedft & 5.3e-9 & 14.7 & 3.4e-14 & 18.7 & 1.9e-10 & 16.7 & 6.4e-16 & 18.7 & 1.1e-13 & 38.7 \\
				& \phipadeopt & 2.0e-15 & 13.7 & 1.8e-15 & 15.7 & 2.1e-15 & 15.7 & 9.8e-16 & 15.7 & 9.9e-14 & 35.7 \\
				\midrule
				\multicolumn{2}{c}{$p=4$} & \hspace{3.5pt}Error & Cost & \hspace{3.5pt}Error & Cost & \hspace{3.5pt}Error & Cost & \hspace{3.5pt}Error & Cost & \hspace{3.5pt}Error & Cost  \\
				\midrule
				\multirow{3}{*}{\rotatebox{0}{$m = 30$}} & \phifunm & 1.1e-15 & 16.3 & 8.2e-15 & 17.3 & 4.0e-15 & 17.3 & 6.8e-16 & 10.3 & 1.5e-14 & 72.3 \\
				& \phipadedft & 5.0e-10 & 27.7 & 1.1e-9 & 32.7 & 1.8e-10 & 32.7 & 3.6e-16 & 27.7 & 5.4e-14 & 87.7 \\
				& \phipadeopt & 1.3e-15 & 23.7 & 3.1e-15 & 28.7 & 1.9e-15 & 28.7 & 4.5e-16 & 23.7 & 5.2e-14 & 78.7 \\
				\midrule
				\multirow{3}{*}{\rotatebox{0}{$m = 80$}} & \phifunm & 1.1e-15 & 16.3 & 8.8e-15 & 17.3 & 4.1e-15 & 17.3 & 8.6e-16 & 10.3 & 2.0e-14 & 72.3 \\
				& \phipadedft & 5.0e-10 & 27.7 & 3.3e-14 & 37.7 & 1.8e-10 & 32.7 & 1.3e-15 & 37.7 & 6.4e-14 & 87.7 \\
				& \phipadeopt & 1.3e-15 & 23.7 & 1.6e-15 & 28.7 & 1.9e-15 & 28.7 & 1.1e-15 & 28.7 & 5.9e-14 & 78.7 \\
				\bottomrule[1pt]
			\end{tabularx}
	}
\end{table}


The results in Table~\ref{tab:hess} show a similar trend to the previous experiments. The new algorithm \phifunm\ consistently incurs lower computational cost than the two variants of \phipade.
We also examined the execution times of these algorithms (not reported) and found them to be typically between $10^{-3}$ and $10^{-2}$ seconds.
Both \phifunm\ and \phipadeopt\ deliver good and comparable accuracy, whereas \phipadedft\ again exhibits instability in several cases.

\section{Conclusions}
\label{sect.conclusion}
We have developed a novel algorithm for the simultaneous computation of matrix $\varphi$-functions, which play a central role in exponential integrator methods for solving stiff systems of ODEs. The proposed algorithm builds on a carefully designed scaling and recovering method.

The key strengths of the algorithm lie, first, in its rigorous backward error analysis, which yields sharp relative error bounds in terms of the sequence $\norm{A^k}^{1/k}$, enabling the selection of the smallest possible scaling parameter. Second, the implementation of the recurrence relation~\eqref{rec.pade} eliminates the need for repeated rational approximations: the highest-index function $\varphi_p$ is approximated using a diagonal Pad\'e approximant, and the lower-index functions $\varphi_j$, for $0 \le j < p$, are then efficiently computed via essentially a single matrix multiplication for each $j$. The algorithmic parameters are selected on the fly to optimize the overall computational cost.

Another important feature of the algorithm is its ability to exploit matrix triangularity. When the input matrix is triangular or quasi-triangular, as commonly occurs after a Schur decomposition, the recovery phase effectively controls error propagation in computing the matrix exponential, mitigating the transfer of errors  to the other $\varphi$-functions.
Leveraging this feature, if the input is a Hessenberg matrix produced by a Krylov algorithm, one can first compute its Schur form and then apply the proposed algorithm to the resulting (quasi-)triangular factor.

A comprehensive set of numerical experiments demonstrates the consistent performance advantages of the proposed algorithm over existing alternatives, in both computational efficiency and numerical accuracy.
The algorithm exhibits remarkable numerical forward stability; a full characterization of its overall numerical backward stability remains an interesting open problem that we look forward to addressing in future work.
\section*{Acknowledgments}  
We thank the anonymous reviewers for their comments and suggestions, which helped improve the presentation of this paper.

\bibliographystyle{siamplain}
\bibliography{strings,phi_paper,njhigham}

\def\noopsort#1{}\def\hbk{hardback}\def\pbk{paperback}
\begin{thebibliography}{10}

\bibitem{almo24}
{\sc A.~H. Al-Mohy}, {\em A new algorithm for computing the exponential of a
  block triangular matrix}, 2025, \url{https://arxiv.org/abs/2410.03575}.
\newblock To appear in SIAM J. Sci. Comput.

\bibitem{alhi09a}
{\sc A.~H. Al-Mohy and N.~J. Higham}, {\em A new scaling and squaring algorithm
  for the matrix exponential}, SIAM J. Matrix Anal. Appl., 31 (2009),
  pp.~970--989, \url{https://doi.org/10.1137/09074721X}.

\bibitem{alhi11}
{\sc A.~H. Al-Mohy and N.~J. Higham}, {\em Computing the action of the matrix
  exponential, with an application to exponential integrators}, SIAM J. Sci.
  Comput., 33 (2011), pp.~488--511, \url{https://doi.org/10.1137/100788860}.

\bibitem{ahl22}
{\sc A.~H. Al-Mohy, N.~J. Higham, and X.~Liu}, {\em Arbitrary precision
  algorithms for computing the matrix cosine and its {Fr\'echet} derivative},
  SIAM J. Matrix Anal. Appl., 43 (2022), pp.~233--256,
  \url{https://doi.org/10.1137/21m1441043}.

\bibitem{best24}
{\sc K.~Bergermann and M.~Stoll}, {\em Adaptive rational {Krylov} methods for
  exponential {Runge--Kutta} integrators}, SIAM J. Matrix Anal. Appl., 45
  (2024), pp.~744--770, \url{https://doi.org/10.1137/23M1559439}.

\bibitem{bsw07}
{\sc H.~Berland, B.~Skaflestad, and W.~M. Wright}, {\em {EXPINT}---a {MATLAB}
  package for exponential integrators}, ACM Trans. Math. Softw., 33 (2007),
  p.~4–es, \url{https://doi.org/10.1145/1206040.1206044}.

\bibitem{cali07}
{\sc M.~Caliari}, {\em Accurate evaluation of divided differences for
  polynomial interpolation of exponential propagators}, Computing, 80 (2007),
  pp.~189--201, \url{https://doi.org/10.1007/s00607-007-0227-1}.

\bibitem{cceo24}
{\sc M.~Caliari, F.~Cassini, L.~Einkemmer, and A.~Ostermann}, {\em Accelerating
  exponential integrators to efficiently solve semilinear
  advection-diffusion-reaction equations}, SIAM J. Sci. Comput., 46 (2024),
  pp.~A906--A928, \url{https://doi.org/10.1137/23M1562056}.

\bibitem{ckor16}
{\sc M.~Caliari, P.~Kandolf, A.~Ostermann, and S.~Rainer}, {\em The {Leja}
  method revisited: Backward error analysis for the matrix exponential}, SIAM
  J. Sci. Comput., 38 (2016), pp.~A1639--A1661,
  \url{https://doi.org/10.1137/15M1027620}.

\bibitem{cvb04}
{\sc M.~Caliari, M.~Vianello, and L.~Bergamaschi}, {\em Interpolating discrete
  advection–diffusion propagators at {Leja} sequences}, J. Comput. Appl.
  Math., 172 (2004), pp.~79--99,
  \url{https://doi.org/10.1016/j.cam.2003.11.015}.

\bibitem{clpu13}
{\sc C.~Clancy and J.~A. Pudykiewicz}, {\em On the use of exponential time
  integration methods in atmospheric models}, Tellus A: Dyn. Meteorol.
  Oceanogr., 65 (2013), p.~20898,
  \url{https://doi.org/10.3402/tellusa.v65i0.20898}.

\bibitem{coma02}
{\sc S.~Cox and P.~Matthews}, {\em Exponential time differencing for stiff
  systems}, J. Comput. Phys., 176 (2002), pp.~430--455,
  \url{https://doi.org/10.1006/jcph.2002.6995}.

\bibitem{crmu23}
{\sc M.~Croci and J.~Mu{\~n}oz-Matute}, {\em Exploiting {Kronecker} structure
  in exponential integrators: {Fast} approximation of the action of
  $\varphi$-functions of matrices via quadrature}, J. Comput. Sci., 67 (2023),
  p.~101966, \url{https://doi.org/10.1016/j.jocs.2023.101966}.

\bibitem{dahi03}
{\sc P.~I. Davies and N.~J. Higham}, {\em A {Schur--Parlett} algorithm for
  computing matrix functions}, SIAM J. Matrix Anal. Appl., 25 (2003),
  pp.~464--485, \url{https://doi.org/10.1137/S0895479802410815}.

\bibitem{dahu11}
{\sc T.~A. Davis and Y.~Hu}, {\em The university of {Florida} sparse matrix
  collection}, ACM Trans. Math. Software, 38 (2011), pp.~1--25,
  \url{https://doi.org/10.1145/2049662.2049663}.

\bibitem{det23}
{\sc P.~J. Deka, L.~Einkemmer, and M.~Tokman}, {\em {LeXInt}: {Package} for
  exponential integrators employing {Leja} interpolation}, SoftwareX, 21
  (2023), p.~101302, \url{https://doi.org/10.1016/j.softx.2022.101302}.

\bibitem{domo02}
{\sc E.~D. Dolan and J.~J. Mor{\'e}}, {\em Benchmarking optimization software
  with performance profiles}, Math. Program., 91 (2002), pp.~201--213,
  \url{https://doi.org/10.1007/s101070100263}.

\bibitem{etl17}
{\sc L.~Einkemmer, M.~Tokman, and J.~Loffeld}, {\em On the performance of
  exponential integrators for problems in magnetohydrodynamics}, J. Comp.
  Phys., 330 (2017), pp.~550--565,
  \url{https://doi.org/10.1016/j.jcp.2016.11.027}.

\bibitem{fasi19}
{\sc M.~Fasi}, {\em Optimality of the {Paterson–Stockmeyer} method for
  evaluating matrix polynomials and rational matrix functions}, Linear Algebra
  Appl., 574 (2019), pp.~182--200,
  \url{https://doi.org/https://doi.org/10.1016/j.laa.2019.04.001}.

\bibitem{fahi19}
{\sc M.~Fasi and N.~J. Higham}, {\em An arbitrary precision scaling and
  squaring algorithm for the matrix exponential}, SIAM J. Matrix Anal. Appl.,
  40 (2019), pp.~1233--1256, \url{https://doi.org/10.1137/18M1228876}.

\bibitem{grt18}
{\sc S.~Gaudreault, G.~Rainwater, and M.~Tokman}, {\em {KIOPS}: {A} fast
  adaptive {Krylov} subspace solver for exponential integrators}, J. Comput.
  Phys., 372 (2018), pp.~236--255,
  \url{https://doi.org/10.1016/j.jcp.2018.06.026}.

\bibitem{gogr14}
{\sc T.~G{\"o}ckler and V.~Grimm}, {\em Uniform approximation of
  $\varphi$-functions in exponential integrators by a rational {Krylov}
  subspace method with simple poles}, SIAM J. Matrix Anal. Appl., 35 (2014),
  pp.~1467--1489, \url{https://doi.org/10.1137/140964655}.

\bibitem{gutt13}
{\sc S.~G\"{u}ttel}, {\em Rational {K}rylov approximation of matrix functions:
  {N}umerical methods and optimal pole selection}, GAMM-Mitteilungen, 36
  (2013), pp.~8--31, \url{https://doi.org/10.1002/gamm.201310002}.

\bibitem{high:FM}
{\sc N.~J. Higham}, {\em Functions of Matrices: {Theory} and Computation},
  Society for Industrial and Applied Mathematics, Philadelphia, PA, USA, 2008,
  \url{https://doi.org/10.1137/1.9780898717778}.

\bibitem{hili21}
{\sc N.~J. Higham and X.~Liu}, {\em A multiprecision derivative-free
  {Schur--Parlett} algorithm for computing matrix functions}, SIAM J. Matrix
  Anal. Appl., 42 (2021), pp.~1401--1422,
  \url{https://doi.org/10.1137/20m1365326}.

\bibitem{himi21}
{\sc N.~J. Higham and M.~Mikaitis}, {\em Anymatrix: {An} extensible {MATLAB}
  matrix collection}, Numer. Algorithms, 90 (2021), pp.~1175--1196,
  \url{https://doi.org/10.1007/s11075-021-01226-2}.

\bibitem{hiti00n}
{\sc N.~J. Higham and F.~Tisseur}, {\em A block algorithm for matrix $1$-norm
  estimation, with an application to $1$-norm pseudospectra}, SIAM J. Matrix
  Anal. Appl., 21 (2000), pp.~1185--1201,
  \url{https://doi.org/10.1137/S0895479899356080}.

\bibitem{holu97}
{\sc M.~Hochbruck and C.~Lubich}, {\em On {Krylov} subspace approximations to
  the matrix exponential operator}, SIAM J. Numer. Anal., 34 (1997),
  pp.~1911--1925, \url{https://doi.org/10.1137/S0036142995280572}.

\bibitem{hls98}
{\sc M.~Hochbruck, C.~Lubich, and H.~Selhofer}, {\em Exponential integrators
  for large systems of differential equations}, SIAM J. Sci. Comput., 19
  (1998), pp.~1552--1574, \url{https://doi.org/10.1137/S1064827595295337}.

\bibitem{hoos10}
{\sc M.~Hochbruck and A.~Ostermann}, {\em Exponential integrators}, Acta
  Numerica, 19 (2010), p.~209–286,
  \url{https://doi.org/10.1017/S0962492910000048}.

\bibitem{hos09}
{\sc M.~Hochbruck, A.~Ostermann, and J.~Schweitzer}, {\em Exponential
  {Rosenbrock}-type methods}, SIAM J. Numer. Anal., 47 (2009), pp.~786--803,
  \url{https://doi.org/10.1137/080717717}.

\bibitem{laws67}
{\sc J.~D. Lawson}, {\em Generalized {Runge-Kutta} processes for stable systems
  with large {Lipschitz} constants}, SIAM J. Numer. Anal., 4 (1967),
  pp.~372--380, \url{https://doi.org/10.1137/0704033}.

\bibitem{liu25}
{\sc X.~Liu}, {\em Mixed-precision {Paterson--Stockmeyer} method for evaluating
  polynomials of matrices}, SIAM J. Matrix Anal. Appl., 46 (2025),
  pp.~811--835, \url{https://doi.org/10.1137/24M1675734}.

\bibitem{loto13}
{\sc J.~Loffeld and M.~Tokman}, {\em Comparative performance of exponential,
  implicit, and explicit integrators for stiff systems of {ODEs}}, J. Comput.
  Appl. Math., 241 (2013), pp.~45--67,
  \url{https://doi.org/10.1016/j.cam.2012.09.038}.

\bibitem{loto14}
{\sc J.~Loffeld and M.~Tokman}, {\em Implementation of parallel
  adaptive-{Krylov} exponential solvers for stiff problems}, SIAM J. Sci.
  Comput., 36 (2016), pp.~C591--C616, \url{https://doi.org/10.1137/13094462X}.

\bibitem{miwr05}
{\sc B.~V. Minchev and W.~M. Wright}, {\em A review of exponential integrators
  for first order semi-linear problems}, Tech. Report 2/05, Norwegian
  University of Science and Technology, Trondheim, Norway, 2005.

\bibitem{mova78}
{\sc C.~Moler and C.~Van~Loan}, {\em Nineteen dubious ways to compute the
  exponential of a matrix}, SIAM Rev., 20 (1978), pp.~801--836,
  \url{https://doi.org/10.1137/1020098}.

\bibitem{mova03}
{\sc C.~Moler and C.~Van~Loan}, {\em Nineteen dubious ways to compute the
  exponential of a matrix, twenty-five years later}, SIAM Rev., 45 (2003),
  pp.~3--49, \url{https://doi.org/10.1137/S00361445024180}.

\bibitem{niwr12}
{\sc J.~Niesen and W.~M. Wright}, {\em Algorithm 919: A {Krylov} subspace
  algorithm for evaluating the $\varphi$-functions appearing in exponential
  integrators}, ACM Trans. Math. Softw., 38 (2012),
  \url{https://doi.org/10.1145/2168773.2168781}.

\bibitem{past73}
{\sc M.~S. Paterson and L.~J. Stockmeyer}, {\em On the number of nonscalar
  multiplications necessary to evaluate polynomials}, SIAM J. Comput., 2
  (1973), pp.~60--66, \url{https://doi.org/10.1137/0202007}.

\bibitem{pope63}
{\sc D.~A. Pope}, {\em An exponential method of numerical integration of
  ordinary differential equations}, Comm. Assoc. Comput. Mach., 6 (1963),
  pp.~491--493, \url{https://doi.org/10.1145/366707.367592}.

\bibitem{ragn14}
{\sc S.~Ragni}, {\em Rational {Krylov} methods in exponential integrators for
  {European} option pricing}, Numer. Linear Algebra Appl., 21 (2014),
  pp.~494--512, \url{https://doi.org/10.1002/nla.1894}.

\bibitem{saad92}
{\sc Y.~Saad}, {\em Analysis of some {Krylov} subspace approximations to the
  matrix exponential operator}, SIAM J. Numer. Anal., 29 (1992), pp.~209--228,
  \url{https://doi.org/10.1137/0729014}.

\bibitem{sctr07}
{\sc T.~Schmelzer and L.~N. Trefethen}, {\em Evaluating matrix functions for
  exponential integrators via {Carath\'{e}odory}-{Fej\'{e}r} approximation and
  contour integrals}, Electron. Trans. Numer. Anal., 29 (2007), pp.~1--18.

\bibitem{sidj98}
{\sc R.~B. Sidje}, {\em Expokit: A software package for computing matrix
  exponentials}, ACM Trans. Math. Software, 24 (1998), pp.~130--156,
  \url{https://doi.org/10.1145/285861.285868}.

\bibitem{skwr09}
{\sc B.~Skaflestad and W.~M. Wright}, {\em The scaling and modified squaring
  method for matrix functions related to the exponential}, Appl. Numer. Math.,
  59 (2009), pp.~783--799, \url{https://doi.org/10.1016/j.apnum.2008.03.035}.

\bibitem{tws06}
{\sc L.~N. Trefethen, J.~A.~C. Weideman, and T.~Schmelzer}, {\em Talbot
  quadratures and rational approximations}, BIT, 46 (2006), pp.~653--670,
  \url{https://doi.org/10.1007/s10543-006-0077-9}.

\end{thebibliography}


\end{document}